\def\BibTeX{{\rm B\kern-.05em{\sc i\kern-.025em b}\kern-.08em
    T\kern-.1667em\lower.7ex\hbox{E}\kern-.125emX}}
\newtheorem{thm}{Theorem }
\newtheorem{prop}{Proposition}
\newtheorem{lem}{Lemma }
\newtheorem{defn}{Definition}
\newtheorem{rem}{Remark }
\newtheorem{ass}{Assumption}
\newtheorem{ex}{Example}
\newcommand{\Real}{\mathbb R}
\newcommand{\B}{\mathbb B}
\newcommand{\eps}{\varepsilon}
\newcommand{\norm}[1]{\left\Vert#1\right\Vert}
\newcommand{\ra}{\rightarrow}
\newcommand{\set}[1]{\left\{#1\right\}}
\newcommand{\D}{\mathcal{D}}
\newcommand{\I}{\mathcal{I}}
\newcommand{\R}{\mathcal{R}}
\let\subset\subseteq
\renewcommand{\S}{\mathcal{S}}
\renewcommand{\subset}{\subseteq}
\newcommand{\Sd}{\mathcal{S}_\delta}
\newcommand{\Ds}{\mathcal{D}_{\operatorname{s}}}
\newcommand{\Dd}{\mathcal{D}_{\operatorname{0}}}
\newcommand{\dx}{\dot{x}}
\newtheorem{eg}[thm]{\bf Example}
\newcommand{\qed}{\hfill $\square$}
\title{\LARGE \bf
Towards Learning and Verifying Maximal Lyapunov-Barrier Functions with a  Zubov PDE Formulation}
\author{Yiming Meng and Jun Liu 
\thanks{This research was supported in part by an NSERC Discover Grant and the Canada Research Chairs program. The authors also acknowledge the Mathematics Faculty Computing Facility (MFCF) at the University of Waterloo for computing support.}
\thanks{Both authors are with the Department of Applied Mathematics, Faculty of Mathematics, University of Waterloo, Waterloo, Ontario N2L 3G1, Canada.  Emails: \texttt{yiming.meng@uwaterloo.ca, j.liu@uwaterloo.ca}
        }
}
\begin{document}

\maketitle
\thispagestyle{empty}
\pagestyle{empty}

\begin{abstract}
Verifying stability and safety guarantees for nonlinear systems has received considerable attention in recent years. This property serves as a fundamental building block for specifying more complex  system behaviors and   control objectives. However, estimating the domain of attraction under safety constraints and constructing a Lyapunov–barrier function remain challenging tasks for nonlinear systems. To address this problem, we propose a Zubov PDE formulation with a Dirichlet boundary condition for autonomous nonlinear systems and show that a physics-informed neural network (PINN) solution, once formally verified, can serve as a Lyapunov–barrier function that jointly certifies stability and safety. This approach extends existing converse Lyapunov–barrier theorems by introducing a PDE-based framework with boundary conditions defined on the safe set, yielding a near-optimal certified under-approximation of the true safe domain of attraction.
\end{abstract}
\begin{keywords}
Lyapunov-barrier function, Dirichlet-Zubov, PDE characterization, learning,  physics-informed neural networks, formal verification. 
\end{keywords}

\section{Introduction}

Stability-with-safety specifications serve as fundamental building blocks for defining more complex system behaviors. 
Verification and control synthesis of such specifications have attracted considerable attention in a wide range of safety-critical applications. 
Lyapunov-based methods, including Lyapunov functions (LFs), barrier functions (BFs), and their control counterparts (CLFs and CBFs), enable verification and design of controllers ensuring stability and set invariance without requiring state-space discretization\cite{ames2016control, hsu2015control, nguyen2020dynamic}. These Lyapunov-based approaches have gained renewed interest due to their elegant mathematical foundation and conceptual simplicity, coupled with tractable verification conditions \cite{dai2022convex, ahmadi2016some,  topcu2008local}. 

However, a remaining challenge is that LFs and BFs cannot be easily combined to simultaneously ensure both stability and safety for autonomous and control systems. In particular, for control systems, the controller is often forced to prioritize one objective (safety or stability) at the expense of the other. Recent works have aimed to learn a unified Lyapunov–barrier function \cite{dawson2022safe, romdlony2016stabilization}; however, this approach becomes difficult to apply to complex nonlinear dynamics, especially when the invariant set is unknown, and obtaining verifiable guarantees remains a challenge. 

 
Other related work includes the formal synthesis and verification of neural network Lyapunov functions \cite{liu2024lyznet} and the simultaneous learning of formally verified stabilizing controllers \cite{chang2019neural, zhou2022neural}. Recent research has also explored learning local Lyapunov functions \cite{grune2021computing, gaby2022lyapunov}, control Lyapunov functions (CLFs) \cite{rego2022learning, grune2023examples}, and control Lyapunov–barrier functions (CLBFs) \cite{dawson2022safe} using deep neural networks for potentially high-dimensional systems, typically relying on training with a Lyapunov-inequality-based loss. However, the resulting Lyapunov functions are generally not formally verified. In \cite{meng2024physics}, PINNs were employed to solve Hamilton–Jacobi–Bellman (HJB) equations for optimal control, but the approach relied on an initial stabilizing controller and yielded only local solutions. The work in \cite{liu2025formally} proposed a transformed HJB formulation, demonstrating that verified neural CLFs outperform sum-of-squares and rational CLFs in estimating the null-controllability set of nonlinear systems. However, state constraints were not explicitly considered. The study in \cite{li2025solving} introduced a discounted stabilize–avoid value function, which provides a set-level safety certificate but does not constitute a Lyapunov–barrier feedback law guaranteeing finite-time reachability and safety. The   work \cite{serry2025safe} characterized near-maximal safe domains of attraction, although it focuses on discrete-time systems. The work in \cite{grune2015zubov} employs a PDE formulation but relies on a nontrivial terminal cost, which leads to less convenient boundary conditions and a corresponding Hamilton–Jacobi equation that cannot be easily solved using existing computational tools \cite{liu2024lyznet}. Moreover, the resulting solution does not directly qualify as a Lyapunov–barrier function by definition.

To overcome the limitations of existing Lyapunov–barrier function approaches, we draw inspiration from recent advances in solving the Zubov equation using PINNs \cite{liu2025physics}, which enable the computation of a maximal Lyapunov function whose domain theoretically coincides with the true domain of attraction. In this work, we focus on autonomous systems and propose a PDE-based Lyapunov–barrier formulation that jointly certifies both stability and safety. Our approach generalizes existing converse Lyapunov–barrier theorems \cite{meng2022smooth,romdlony2016stabilization} by introducing a PDE framework with boundary conditions prescribed on the safe set, thereby yielding a formally certified, near-optimal under-approximation of the true safe domain and region of attraction. This framework is expected to lay the foundation for future studies on PDE-based construction of control Lyapunov–barrier functions.

 
\textbf{Notation:} We list some notation used in this paper. 
For $x\in\Real^n$ and $r\ge 0$, we denote the ball of radius $r$ centered at $x$ by $\B(x;r)=\set{y\in\Real^n:\,\norm{y-x}\le r}$, where $\norm{\cdot}$ is the Euclidean norm. For a closed set $A\subset\Real^n$ and $x\in\Real^n$, we denote the distance from $x$ to $A$ by $\norm{x}_{A}=\inf_{y\in A}\norm{x-y}$. 
For a set $A\subseteq\Real^n$, $\overline{A}$ denotes its closure.  
For two sets $A,B\in\Real^n$, we use $A\setminus B$ to denote the set difference defined by $A\setminus B=\set{x:\,x\in A,\,x\not\in B}$.  We also write $a\gtrsim b$ if there exists a $C>0$ (independent of $a$ and $b$) such that $a\geq Cb$.

\section{Preliminaries}\label{sec:prelim}
\subsection{Dynamical Systems and Concept of Stability-with-Safety}
Consider a continuous-time dynamical system 
\begin{equation}\label{eq:sys}
     \dx= f(x),
\end{equation}
where $x\in\Real^n$ and $f:\,\Real^n\ra\Real^n$ is assumed to be locally Lipschitz. For each $x_0\in\Real^n$, we denote the unique solution starting from $x_0$  by $\phi(t;x_0)$ for all $t\in\I$, where $\I$ is the maximal interval of existence. We assume $\I = [0, \infty)$ throughout the paper.  

For $T\ge 0$, we define $\R^{\geq T}(x_0)=\set{\phi(t;x_0): t\geq T}$ 
and  write $\R(x_0) = \R^{\geq 0}(x_0)$. 
For a set $\mathcal{X}_0\subseteq\Real^n$, we define 
$\R(\mathcal{X}_0)   =  \bigcup_{x_0\in \mathcal{X}_0} \R(x_0)$.

\begin{defn}[Forward invariance]
A set $\Omega\subset\Real^n$ is said to be forward invariant for \eqref{eq:sys}, if solutions from $\Omega$ satisfy $\mathcal{R}(\Omega)\subseteq\Omega$. \qed
\end{defn}

Let $x_{\operatorname{eq}}$ be a locally asymptotically stable equilibrium point  of \eqref{eq:sys}. Without loss of generality, we assume that 
$x_{\operatorname{eq}}=0$ throughout the remainder of the paper.
\begin{defn}[Domain of attraction]
 The domain of attraction of the origin is defined as  
\begin{equation*}
    \Dd  = \big\{x\in \Real^n:\, \lim_{t\ra\infty}\norm{\phi(t;x)} =0\big\}. \tag*{\(\square\)}
\end{equation*}
\end{defn}

\begin{defn}[Stability-with-safety guarantee]
Denoting $\mathcal{X}_0\subseteq\Real^n$ as the set of initial conditions and $U\subseteq\Real^n$ as the obstacle,
we say that \eqref{eq:sys} satisfies a stability-with-safety   specification $(\mathcal{X}_0,U)$   if the following conditions hold:
\begin{enumerate}
    \item (asymptotic stability) the origin is asymptotically stable  and   $\mathcal{X}_0\subseteq\Dd$. 
    \item (safe w.r.t. $U$) $\R(\mathcal{X}_0)\cap U=\emptyset$. \qed
\end{enumerate} 
\end{defn}

\subsection{Converse Lyapunov–barrier Function Theorem from Previous Work}
Before proceeding, we introduce the following concept of a proper indicator  defined on an open domain \cite{teel2000smooth}, which generalizes the idea of a radially unbounded function on $\Real^n$. 

\begin{defn}
Let $\{0\}$ be contained in an open set $D\subset\Real^n$. A continuous function $\omega:\,D\ra\Real_{\ge 0}$ is said to be a proper indicator for the origin on $D$ if the following two conditions hold: (1) $\omega(x)=0$ if and only if $x = 0$; (2) $\lim_{m\ra\infty} \omega(x_m)=\infty$ for any sequence $\set{x_m}$ in $D$ such that either $x_m\ra p\in \partial D$ or $\norm{x_m}\ra\infty$ as $m\ra\infty$. 
\end{defn} 

We exemplify the construction of a proper indicator as follows.

\begin{ex}\label{eg: indicator_2}
     Suppose $D = \set{x\in\Real^n: \gamma(x)>0}$ and $\partial D = \set{x\in\Real^n: \gamma(x)=0}$, with $\gamma\in\mathcal{C}^\infty(\Real^n)$ such that $\nabla \gamma(x)\neq 0$ for all $x\in\partial D$. 
    Suppose that $0\in D$, and $\overline{D}$ is bounded. Then, by continuity of $\gamma$, $\overline{D}$ is closed and hence compact. Therefore,  the image of $\overline{D}$ is compact, which implies that $\gamma$ attains both maximum and minimum on $\overline{D}$. It is clear that $\min_{x\in\overline{D}}\gamma(x) = 0$ and $p:=\max_{x\in\overline{D}}\gamma(x) = \max_{x\in D}\gamma(x)>0$ (due to  $D\neq \emptyset$). 

    Now, consider 
    \begin{equation}\label{E: omega_1}
        \omega_1(x) = \theta(x)\cdot\frac{\norm{x}^2}{\gamma^k(x)}, \;k\geq 1,
    \end{equation}
    and $\omega_2(x) =  \norm{x}^2 \cdot (1-\log(\gamma(x)/p)),$
where $\theta\in\mathcal{C}_b^1(\Omega; \Real_+)$ is a positive, bounded scaling function defined on $\Omega\supseteq D$. 
    By definition, one can verify that both of the above constructions are proper indicators on $D$. \qed

\end{ex}

The work in \cite{meng2022smooth} considered uniting Lyapunov and barrier functions   based on the concept of a proper indicator. Specifically, if the domain of a Lyapunov function is restricted to the set of initial conditions from which trajectories can simultaneously satisfy the conditions of asymptotic stability and safety, then a smooth Lyapunov function can be constructed that satisfies the following theorem, which is rephrased to be consistent with the notation used in this paper. 

\begin{thm}\label{thm:conv1}
Suppose that  $U$ is closed  and $0\notin U$. Then the following two statements are equivalent: 
\begin{enumerate}
    \item System \eqref{eq:sys} satisfies the stability with safety guarantee specification  $(\mathcal{X}_0,U)$.
    \item There exists an open set $D$ such that $\set{0}\cup \mathcal{X}_0\subset D$ and $D\cap U=\emptyset$, a smooth function $V:\,D\ra\Real_{\ge 0}$ and class $\mathcal{K}_{\infty}$ functions $\alpha_1$, $\alpha_2$, and $\alpha_3$, such that \begin{equation}\label{eq:lyap1}
    \alpha_1(\omega_0(x)) \le V(x) \le \alpha_2(\omega_0(x))
\end{equation}
and
\begin{equation}\label{eq:lyap2}
    \nabla V(x)\cdot f(x)  \le -\alpha_3(\omega_0(x)) 
\end{equation}
for all $x\in D$, {where $\omega_0$ be any proper indicator   on $D$}. 
\end{enumerate}
Moreover, the set $D$ can be taken as the following set
     \begin{align}
& \D\\ =& \big\{x\in \Real^n:\,   \lim_{t\ra\infty}\norm{\phi(t;x)}=0 \text{ and }  \notag 
  \phi(t;x)\not\in U,\forall t\ge 0\big\}.  
\end{align} \qed
\end{thm}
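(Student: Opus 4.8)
The plan is to prove the two implications separately and, in the converse direction, to build the Lyapunov function directly on the set $\D$, so that the ``Moreover'' claim comes for free; this follows the line of \cite{meng2022smooth}. For $(2)\Rightarrow(1)$, the only structural fact needed is that every sublevel set $\set{x\in D:\,\omega_0(x)\le c}$ is a compact subset of $D$ --- it is bounded and closed in $\Real^n$ because $\omega_0\ra\infty$ along any sequence leaving $D$ or diverging. Given $x_0\in\set{0}\cup\mathcal{X}_0\subset D$, \eqref{eq:lyap2} makes $t\mapsto V(\phi(t;x_0))$ nonincreasing as long as the trajectory stays in $D$, and \eqref{eq:lyap1} then confines $\phi(t;x_0)$ to a fixed compact sublevel set of $\omega_0$ inside $D$; hence the trajectory never leaves $D$, so it never meets $U$ and $\R(\mathcal{X}_0)\cap U=\emptyset$. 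Positive definiteness of $\omega_0$ with \eqref{eq:lyap1}--\eqref{eq:lyap2} makes $V$ a classical strict Lyapunov function near the origin (local asymptotic stability), and a LaSalle / $\omega$-limit-set argument using the strict decrease forces $\phi(t;x_0)\ra 0$, i.e.\ $\mathcal{X}_0\subset\Dd$.

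For $(1)\Rightarrow(2)$ I would take $D=\D$ and first settle the topological facts: $\D\cap U=\emptyset$ is immediate from $\phi(0;x)=x$; $\set{0}\cup\mathcal{X}_0\subset\D$ combines $\mathcal{X}_0\subset\Dd$ with $\R(\mathcal{X}_0)\cap U=\emptyset$ (together with $\phi(\cdot;0)\equiv 0$ and $0\notin U$); and forward invariance of $\D$ follows from the semigroup identity $\phi(t;\phi(s;x))=\phi(t+s;x)$. The point needing care is that $\D$ is open: from uniform stability of $0$ and closedness of $U$ with $0\notin U$, there is $\delta>0$ such that trajectories starting in $\B(0;\delta)$ stay in a fixed small ball around $0$ contained in $\Dd$ and disjoint from $U$, whence $\B(0;\delta)\subset\D$; then for an arbitrary $x_0\in\D$ one picks $T$ with $\norm{\phi(T;x_0)}<\delta/2$, uses continuous dependence on initial data to push a neighborhood of $x_0$ into $\B(0;\delta)$ at time $T$, and uses that $\set{\phi(t;x_0):\,0\le t\le T}$ is compact and disjoint from the closed set $U$ to keep nearby trajectories out of $U$ on $[0,T]$.

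The analytic heart is a $\mathcal{KL}$ estimate for a proper indicator. Fix any proper indicator $\omega_0$ on $\D$ (one exists, e.g.\ from the constructions exhibited above or from \cite{teel2000smooth}). For $c>0$ the sublevel set $K_c=\set{x\in\D:\,\omega_0(x)\le c}$ is compact in $\D$, and I would show that $\R(K_c)$ has compact closure inside $\D$ and that $\sup_{x\in K_c}\sup_{t\ge\tau}\omega_0(\phi(t;x))\ra 0$ as $\tau\ra\infty$: every trajectory from $K_c$ enters a prescribed small neighborhood of $0$ in finite time, continuous dependence makes that entry time uniform over a finite subcover of $K_c$, and uniform stability plus forward invariance then trap the tail of each such trajectory in one fixed compact subset of $\D$. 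A standard $\mathcal{KL}$-majorization lemma (as in \cite{teel2000smooth}) upgrades this to $\omega_0(\phi(t;x))\le\beta(\omega_0(x),t)$ for all $x\in\D$ and $t\ge 0$, with $\beta\in\mathcal{KL}$. Since $\omega_0$ is a proper indicator on the open, forward-invariant set $\D$ and the flow is continuous (local Lipschitzness of $f$), the Teel--Praly converse theorem \cite{teel2000smooth} then yields a smooth $V:\D\ra\Real_{\ge 0}$ and $\mathcal{K}_\infty$ functions $\alpha_1,\alpha_2$ with $\alpha_1(\omega_0(x))\le V(x)\le\alpha_2(\omega_0(x))$ and $\nabla V(x)\cdot f(x)\le -V(x)$; taking $\alpha_3=\alpha_1$ gives \eqref{eq:lyap1}--\eqref{eq:lyap2} on $D=\D$, which with the topological facts is exactly statement (2) and the ``Moreover'' claim.

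I expect the main obstacle to be precisely the uniform part of the $\mathcal{KL}$ estimate: proving that the reachable set of each compact sublevel set of $\omega_0$ stays bounded away from $\partial\D$ --- equivalently, has compact closure in $\D$. This ``uniform boundedness'' is what promotes pointwise convergence to the origin into a genuine $\mathcal{KL}$ bound compatible with $\omega_0$, and it is the only step where the interplay among the basin $\D$, the obstacle $U$, and the proper indicator is essential; everything after it in the converse direction is supplied by the cited converse theorem.
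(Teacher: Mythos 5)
Your proposal is correct and follows essentially the same route as the source of this theorem: the paper does not prove Theorem~\ref{thm:conv1} itself but quotes it from \cite{meng2022smooth}, whose argument likewise establishes that $\D$ is open, forward invariant, and contains $\set{0}\cup\mathcal{X}_0$ while avoiding $U$, derives a $\mathcal{KL}$ estimate for a proper indicator on $\D$ (with the uniform-over-compact-sublevel-sets step you correctly flag as the crux), and then invokes the Teel--Pralý converse Lyapunov theorem \cite{teel2000smooth} to obtain the smooth $V$ satisfying \eqref{eq:lyap1}--\eqref{eq:lyap2}. Your sufficiency direction via compactness of sublevel sets of $\omega_0$ is also the standard and correct argument.
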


Note that $\D$ has   been  proved to be open, forward invariant, and $\mathcal{X}_0\subset\D\subseteq \Dd$   if \eqref{eq:sys} satisfies a stability-with-safety guarantee specification $(\mathcal{X}_0, U)$ \cite{meng2022smooth}. While the above 
$V$ already serves as both a Lyapunov function and, partially, as a barrier function ensuring safety and stability, it is only defined on a restricted forward-invariant set and therefore cannot fulfill the full role of a Lyapunov–barrier function. That said, the existence of $V$ provides the foundation for constructing a barrier function $B:\Real^n\ra\Real$ using a partition-of-unity type patching approach based on $\sup_{x\in\mathcal{X}_0}V(x)-V(x)$ and its suitable extensions \cite[Proposition 12]{meng2022smooth}, under the assumption that $\mathcal{X}_0$ is compact,, such that
\begin{enumerate}
     \item $\mathcal{X}_0\subset\{x\in\Real^n: B(x)\geq 0\}$ and $B(x)<0$ for all $x\in U$;
    \item $\nabla B(x)\cdot f(x)\geq 0$ for all $x\in\D$. 
\end{enumerate} 
The pair $(V,B)$ is also referred to as a Lyapunov–barrier function pair. 

It is worth noting that the above converse Lyapunov–barrier function theorem is proved analytically. 
In general, finding suitable Lyapunov-like functions is difficult, even when the intrinsic stability or safety properties of a nonlinear system are known. Therefore, this paper employs PDE-based characterizations that yield a Lyapunov candidate over the entire domain $\D$, which can be extended to form a Lyapunov–barrier function pair, and a single Lyapunov–barrier function that   ensures stability and safety.

\subsection{Problem Formulation}
Motivated by the drawbacks of previous work on the converse Lyapunov–barrier function theorem, we aim to: (i) Characterize a Lyapunov function $V$ defined on the entire domain $\D$ via a Lyapunov-like equation formulation; (ii)  Construct a Lyapunov–barrier function $W$ such that $W$ is positive definite and   satisfies the following conditions 
\begin{enumerate}
    \item $\nabla W(x)\cdot f(x)<0$ for all $x\in\D\setminus\{0\}$;
    \item $W(x)<1$ for all $x\in\D$; 
    \item $W(x)\geq 1$ for all $x\in U$,
\end{enumerate}
via a Dirichlet boundary value problem for a Zubov-type equation; (iii)  Predict the domain $\D$ and use a physics-informed neural network (PINN) to solve for $W$; (iv)  Verify the condition  1) for $W$.

\section{Converse Lyapunov-Barrier Function for Stability with Safety Guarantees} \label{sec:stability}

In this section, we  derive a set of PDE-based converse Lyapunov–barrier function theorems for \eqref{eq:sys}, which satisfies a stability-with-safety guarantee specification $(\mathcal{X}_0, U)$ for some unsafe set $U\subseteq\Real^n$. To assist problem solving, we make the following assumptions on $\Ds = U^c$. 
\begin{ass}\label{ass: h}
    We assume 
    \begin{enumerate}
        \item $\Ds = \set{x\in\Real^n: h(x)<1}$ with some $h\in\mathcal{C}^1(\Real^n)$ such that $\nabla h(x)  \neq 0$ on $\set{x\in\Real^n: h(x) = 1}$;
        \item $\Ds$ is open and bounded;
        \item $0\in\Ds$. \qed
    \end{enumerate}
\end{ass}

\begin{rem}
    Consider $W_\delta = \set{x\in\Ds: \norm{x}_{\partial \Ds}<\delta}$. Set $\gamma((x) = 1-h(x)$. Then, there exist $a, b>0$ such that $a\leq \norm{\nabla \gamma(x)}\leq b$ for all $x\in W_\delta$. For $x = y + \norm{x}_{\partial \Ds}\frac{\nabla \gamma((y)}{\norm{\nabla \gamma((y)}}$, we have $\gamma((x)-\gamma((y) = \int_0^1\nabla \gamma((y+\theta(x-y))\cdot (x-y)d\theta$ and $\gamma((x)\leq b\cdot \norm{x}_{\partial \Ds}$. Similarly, we can verify that  $\gamma((x)\geq a\cdot \norm{x}_{\partial \Ds}$, indicating that $\gamma(x)$ is an equivalent metric w.r.t. $\norm{x}_{\partial \Ds}$ in the boundary layer $W_\delta$. \qed
\end{rem}

\subsection{Maximal Converse Lyapunov Function for Stability-with-Safety Specification}

This subsection employs ideas from the Lyapunov equation to explicitly construct a maximal Lyapunov function for $(\mathcal{X}_0, U)$ via a proper indicator defined on $\Ds = U^c$, and establishes its properties.

To proceed, we define the exit time from $\Ds$, or equivalently entry time in $U$, as
\begin{equation}
    \tau(x) = \inf\set{t\geq 0: \phi(t;x)\in U}. 
\end{equation}
We use the convention that $\inf\emptyset = +\infty$, so that $\tau(x) = +\infty$ whenever the trajectory never reaches $U$.  It is clear that $\D\subseteq\Ds$  and 
\begin{equation}
\begin{split}
         \D    = & \big\{x\in \Real^n:\,   \lim_{t\ra\infty}\norm{\phi(t;x)}=0 \text{ and }  \notag 
  \tau(x) = +\infty\big\}\\
   = & \Dd\cap \big\{x\in \Real^n:\,    
  \tau(x) = +\infty\big\}. 
\end{split}
\end{equation} For future reference, the following \textit{Dynamic Programming Principle} for $\tau(x)$ is provided.

\begin{prop}\label{prop: tau_DPP}
    Let $\overline{\Real} = \Real\cup\set{-\infty, \infty}$ denote the extended real number system, equipped with the standard extended real arithmetic  $a + (+\infty) = +\infty$ for all $a\in\Real\cup\set{+\infty}$. Then, for all $x\in \Real^n$, 
    \begin{equation}\label{E: tau_DPP}
        \tau(x) = t\wedge\tau(x) + \mathds{1}_{\set{t\leq \tau(x)}}\tau(\phi(t;x)), \;\;t\geq 0
    \end{equation}
Furthermore, for all $x\in\Real^n$ and for all $t\geq 0$, $\tau(x)<\infty$ if and only if $\mathds{1}_{\set{t\leq \tau(x)}}\tau(\phi(t;x))<\infty$. \qed\end{prop}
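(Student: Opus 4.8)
The plan is to prove \eqref{E: tau_DPP} by splitting on the sign of $t-\tau(x)$, using only three ingredients: the flow (semigroup) identity $\phi(t+s;x)=\phi(s;\phi(t;x))$ (valid for all $s\ge 0$ since $\I=[0,\infty)$), continuity of $s\mapsto\phi(s;x)$, and closedness of $U$. I read the right-hand side of \eqref{E: tau_DPP} with $t\wedge\tau(x)=\min\{t,\tau(x)\}$ taken in $\overline{\Real}$ and with the convention that the product $\mathds{1}_{\{t\le\tau(x)\}}\,\tau(\phi(t;x))$ equals $0$ whenever $t>\tau(x)$ — even if $\tau(\phi(t;x))=+\infty$ — and equals $\tau(\phi(t;x))$ whenever $t\le\tau(x)$. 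If $t>\tau(x)$, then $t<\infty$ forces $\tau(x)<\infty$, so $t\wedge\tau(x)=\tau(x)$, the indicator vanishes, and the right-hand side reduces to $\tau(x)$; thus \eqref{E: tau_DPP} is trivial in this case, and the final equivalence holds because $\tau(x)<\infty$ while the indicator term is $0<\infty$.

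It remains to treat $t\le\tau(x)$, where $t\wedge\tau(x)=t$ and the indicator is $1$, so I must show $\tau(x)=t+\tau(\phi(t;x))$. By the semigroup identity, $\{s\ge 0:\phi(s;\phi(t;x))\in U\}=\{\,r-t:\ r\ge t,\ \phi(r;x)\in U\,\}$, hence $\tau(\phi(t;x))=\inf\{r\ge t:\phi(r;x)\in U\}-t$. Now every element of $\{r\ge 0:\phi(r;x)\in U\}$ is $\ge\tau(x)\ge t$, so this set coincides with $\{r\ge t:\phi(r;x)\in U\}$ (which trivially contains it); consequently the latter has infimum $\tau(x)$. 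If $\tau(x)=+\infty$ both sets are empty, $\tau(\phi(t;x))=+\infty$, and $t+(+\infty)=+\infty=\tau(x)$ by the stated extended arithmetic; if $\tau(x)<\infty$ then $\tau(\phi(t;x))=\tau(x)-t$ and $t+\tau(\phi(t;x))=\tau(x)$. This proves \eqref{E: tau_DPP}, and at the same time shows that in this case the indicator term $\tau(\phi(t;x))$ is finite precisely when $\tau(x)<\infty$, which together with the previous case gives the asserted equivalence.

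The step I expect to require the most care is the boundary instant $t=\tau(x)$ with $\tau(x)<\infty$: there I need $\tau(\phi(\tau(x);x))=0$, i.e. $\phi(\tau(x);x)\in U$, so that the infimum defining $\tau(x)$ is actually attained and the set manipulations above are literally valid at the left endpoint. This is exactly where closedness of $U$ enters: by definition of the infimum there is a sequence $t_k\downarrow\tau(x)$ with $\phi(t_k;x)\in U$ (or $\tau(x)$ is attained outright), and continuity of the flow together with $U$ closed yields $\phi(\tau(x);x)=\lim_k\phi(t_k;x)\in U$. Under Assumption~\ref{ass: h} — equivalently, under the hypothesis that $U$ is closed in Theorem~\ref{thm:conv1} — this holds, so the case analysis closes and the proposition follows.
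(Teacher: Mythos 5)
Your proof is correct and follows the same standard dynamic-programming argument the paper uses: the paper defers the case $\tau(x)<\infty$ to Bardi--Capuzzo-Dolcetta, which is precisely the time-shift/semigroup computation you write out in full, and both proofs handle the second claim by the identical case split on $t\lessgtr\tau(x)$. One minor remark: your set-infimum identity $\tau(\phi(t;x))=\inf\{r\ge t:\phi(r;x)\in U\}-t$ already yields $\tau(\phi(\tau(x);x))=0$ without attainment of the infimum, so the closedness-of-$U$ step at $t=\tau(x)$ is harmless but not actually needed.
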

\begin{proof}
    For $\tau(x)<+\infty$, the first part of the proof falls in the same procedure as \cite[Chap. Iv, Prop. 2.1]{bardi1997optimal}. For $\tau(x)=+\infty$, the conclusion of \eqref{E: tau_DPP}  follows by transitivity of reachability. 

For the second part, if $\tau(x)<\infty$, then either $t\leq \tau(x)$, which implies, by \eqref{E: tau_DPP},  that $\mathds{1}_{\set{t\leq \tau(x)}}\tau(\phi(t;x)) = \tau(x)-t<\infty$; or $t>\tau(x)$, implying $\mathds{1}_{\set{t\leq \tau(x)}}\tau(\phi(t;x)) = 0<\infty$.  Conversely, 
    if $\tau(x)=\infty$, then $\set{t\geq 0: \phi(t;x)\in U} = \emptyset$, and $\mathds{1}_{\set{t\leq \tau(x)}}\tau(\phi(t;x)) = \tau(\phi(t;x)) = \infty$. 
\end{proof}

We then construct $V$  via a proper indicator $\omega(x)$ defined on $\Ds$. For $\omega: \Ds\ra \Real$, define 
\begin{equation}\label{E: V}
V(x) =
\begin{cases}
\displaystyle \int_0^{\tau(x)} \omega(\phi(t;x))\,dt + q(\phi(\tau(x);x)), & \tau(x) < \infty, \\[1.2ex]
\displaystyle \int_0^{\infty} \omega(\phi(t;x))\,dt, & \tau(x) = \infty.
\end{cases}
\end{equation}
where $g$ is the terminal cost satisfying $q(x) = \infty$ for $x\in\partial\Ds\cup(\Real^n\setminus\Ds)$ and $q(x) = q_0(x)$ for $x\in\Ds$, where   $g_0\in\mathcal{C}(\Ds)$ and $g_0(x)  \ra +\infty$ as $x\ra\partial\Ds$. The following is the \emph{Dynamic Programming Principle} for $V$. 

\begin{lem}
    Consider $V$ defined in \eqref{E: V}. Then, 
\begin{equation}\label{E: V_DPP2}
     V(x)  =   \int_0^{t\wedge \tau(x)}\omega(\phi(s;x))ds +  V(\phi(t\wedge \tau(x);x))
\end{equation}
for all $x\in \Real^n$ and for all $t\geq 0$. \qed
\end{lem}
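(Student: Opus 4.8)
The plan is to fix $x\in\Real^n$ and $t\ge 0$ and verify \eqref{E: V_DPP2}, read as an identity in $\overline{\Real}$, by a case analysis on $\tau(x)$ that parallels Proposition~\ref{prop: tau_DPP}. The only ingredients I would use are the cocycle identity $\phi(s+r;x)=\phi(s;\phi(r;x))$ for $s,r\ge 0$ (immediate from uniqueness of solutions and $\I=[0,\infty)$), additivity of the integral of the \emph{nonnegative} map $s\mapsto\omega(\phi(s;x))$ over concatenated subintervals (so no finiteness has to be assumed), and the extended-arithmetic rule $a+\infty=\infty$. First consider $\tau(x)=\infty$: then $t\wedge\tau(x)=t$ and, by the second part of Proposition~\ref{prop: tau_DPP} (or directly: if $\tau(\phi(t;x))<\infty$ then $\phi\big(t+\tau(\phi(t;x));x\big)\in U$, contradicting $\tau(x)=\infty$), we also have $\tau(\phi(t;x))=\infty$, so the lower branch of \eqref{E: V} applies to both $x$ and $\phi(t;x)$. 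The substitution $r=t+s$ then gives $V(\phi(t;x))=\int_t^{\infty}\omega(\phi(r;x))\,dr$, and splitting $\int_0^{\infty}=\int_0^{t}+\int_t^{\infty}$ is exactly \eqref{E: V_DPP2}.

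It remains to treat $\tau(x)<\infty$. The key preliminary step I would carry out is to show $\phi(\tau(x);x)\in U$: since $\tau(x)$ is the infimum of $\set{s\ge 0:\,\phi(s;x)\in U}$, there is a sequence $t_m\downarrow\tau(x)$ in that set, and continuity of $\phi(\cdot;x)$ together with closedness of $U$ forces $\phi(\tau(x);x)\in\overline U=U$; hence $\tau(\phi(\tau(x);x))=0$ and, by the upper branch of \eqref{E: V} with an empty integral, $V(\phi(\tau(x);x))=q(\phi(\tau(x);x))$. If $t\ge\tau(x)$, then $t\wedge\tau(x)=\tau(x)$ and the right-hand side of \eqref{E: V_DPP2} is $\int_0^{\tau(x)}\omega(\phi(s;x))\,ds+q(\phi(\tau(x);x))$, which is $V(x)$ by the upper branch of \eqref{E: V}. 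If instead $t<\tau(x)$, then $t\wedge\tau(x)=t$, and \eqref{E: tau_DPP} gives $\tau(\phi(t;x))=\tau(x)-t\in(0,\infty)$; applying the upper branch of \eqref{E: V} to $\phi(t;x)$, using $\phi(\tau(x)-t;\phi(t;x))=\phi(\tau(x);x)$ and the substitution $r=t+s$, yields $V(\phi(t;x))=\int_t^{\tau(x)}\omega(\phi(r;x))\,dr+q(\phi(\tau(x);x))$; adding $\int_0^{t}\omega(\phi(s;x))\,ds$ and merging the two integrals again reproduces $V(x)$.

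I do not anticipate a genuine difficulty — the argument is essentially bookkeeping — but two points deserve care and are the closest thing to an obstacle. First, one must establish $\phi(\tau(x);x)\in U$ so that $\tau$, and hence $V$, "restarts from zero" at the exit point; this is precisely where closedness of $U$ is used. Second, the extended-real arithmetic must be handled uniformly: when $\tau(x)<\infty$ both sides of \eqref{E: V_DPP2} are in fact $+\infty$ (since $q\equiv+\infty$ on $U\supseteq\partial\Ds$), while when $\tau(x)=\infty$ they may be finite or infinite, and in every case the concatenation of integrals of the nonnegative integrand $\omega(\phi(\cdot;x))$ remains valid, so the equality persists regardless. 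A minor technical remark: $\omega$ need not be defined at the single endpoint $\phi(\tau(x);x)$, but this is a null set and does not affect any of the integrals above.
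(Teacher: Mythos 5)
Your proof is correct and follows essentially the same route as the paper's: both verify \eqref{E: V_DPP2} directly from the definition \eqref{E: V} by applying it at the shifted point $\phi(t\wedge\tau(x);x)$, invoking the flow (cocycle) property and Proposition~\ref{prop: tau_DPP}, and performing the change of variables $r=s+t\wedge\tau(x)$ to identify the shifted integral with the tail $\int_{t\wedge\tau(x)}^{\tau(x)}\omega(\phi(u;x))\,du$; you merely organize this as an explicit case analysis where the paper uses a unified indicator-function computation. Your added care in showing $\phi(\tau(x);x)\in U$ (via closedness of $U$), so that $\tau$ restarts at zero and $V(\phi(\tau(x);x))=q(\phi(\tau(x);x))$, is a detail the paper uses implicitly and is worth making explicit.
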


\begin{proof}
For any $x\in\Real^n$, let $y = y(t,x): = \phi(t\wedge\tau(x);x)$. We then have $\tau(y) = \mathds{1}_{\set{t\leq \tau(x)}}\tau(\phi(t;x))$. Then,  by \eqref{E: V},
\begin{equation}\label{E: P1P2}
\begin{split}
V(y) &= 
  \underbrace{\int_0^{\tau(y)} \omega(\phi(s;y))\,ds}_{P_1} 
  + 
  \underbrace{q(\phi(\tau(y);y))\mathds{1}_{\{\tau(y)<\infty\}}\vphantom{\int_0^{\tau(y)}}}_{P_2},
\end{split}
\end{equation}
where we slightly abuse the notation by letting $P_2 = q(\phi(\tau(y);y))$ if $\mathds{1}_{\{\tau(y)<\infty\}} =1$, and $P_2 = 0$ otherwise. 

However, with the change of variables $s = u-(t\wedge \tau(x))$ and the property in \eqref{E: tau_DPP}, 
\begin{equation}\label{E: P1}
    \begin{split}
        P_1 & = \int_0^{\tau(y)}\omega(\phi(s; \phi(t\wedge \tau(x);x)))ds \\
        & = \int_{t\wedge\tau(x)}^{t\wedge\tau(x) + \tau(y)}\omega(\phi(u;x))du\\
        & = \int_{t\wedge\tau(x)}^{\tau(x)}\omega(\phi(u;x))du. 
    \end{split}
\end{equation}
On the other hand, by Proposition \ref{prop: tau_DPP}, we have $\mathds{1}_{\{\tau(y)<\infty\}} = \mathds{1}_{\{\tau(x)<\infty\}}$. It follows that
\begin{equation}\label{E: P2}
    \begin{split}
  P_2=    &   q(\phi(\tau(y);y))\mathds{1}_{\{\tau(y)<\infty\}} \\
      = & q(\phi(\mathds{1}_{\set{t\leq \tau(x)}}\tau(\phi(t;x)); \phi(t\wedge \tau(x);x)))\mathds{1}_{\set{\tau(x)<\infty}}\\
      =  & q(\phi(\mathds{1}_{\set{t\leq \tau(x)}}\tau(\phi(t;x)) + t\wedge \tau(x);x))\mathds{1}_{\set{\tau(x)<\infty}}\\
      = & q(\phi(\tau(x);x))\mathds{1}_{\set{\tau(x)<\infty}}, 
    \end{split}
\end{equation}
Therefore, by \eqref{E: V} and combining   \eqref{E: P1P2}, \eqref{E: P1}, and \eqref{E: P2},
\begin{equation*}
    \begin{split}
        V(x)  & = \int_0^{\tau(x)}\omega(\phi(t;x))dt + q(\phi(\tau(x);x))\mathds{1}_{\set{\tau(x)<\infty}}\\
        & = \int_0^{t\wedge\tau(x)}\omega(\phi(t;x))dt + P_1 + P_2  \\
        & = \int_0^{t\wedge\tau(x)}\omega(\phi(t;x))dt + V(\phi(t\wedge\tau(x);x)),
    \end{split}
\end{equation*}
which completes the proof. 
\end{proof}


To verify whether \eqref{E: V} provides a valid construction of a Lyapunov function on $\D$, we impose the following additional  conditions concerning the interplay between the choice of $\omega$ and the integral form of $
V$ in \eqref{E: V}.

\begin{ass}\label{ass: ass_1}
 Suppose $\Dd\setminus\{0\}\neq \emptyset$. Given a proper indicator $\omega:\Ds\ra\Real$ for $\set{0}$ and $\Ds$, we also assume that
    \begin{enumerate}
    \item For any $\delta>0$, there exists $c>0$ such that $\omega(x)>c$ for all $x$ such that $\norm{x}>\delta$. 
    \item There exists an $r$-neighborhood of $\partial\Ds$ and a   function 
$\psi:\Real_+\ra\Real_+$, such that $\psi$ is non-increasing, $\psi(r)\ra \infty$ as $r\ra 0$,  $\omega(x)\gtrsim\psi(\norm{x}_{\partial\Ds})$, and $\int_0^r\psi(x)dx = \infty$. 
        \item  There exists some $\rho>0$ such that $V$ defined in \eqref{E: V} converges for all $x$ such that $\norm{x}<\rho$;
        \item  For any $\eps>0$, there exists $\delta>0$ such that $\norm{x}<\delta$ implies $V(x)<\eps$. \qed
    \end{enumerate}
\end{ass}

Note that items 1), 3), and 4) of Assumption \ref{ass: ass_1} are the same as those in Assumption 1 of \cite{liu2025physics}, except for the different choice of $\omega$, where the $\omega$ used in \cite{liu2025physics} can be regarded as a proper indicator for $\set{0}$ and $\Real^n$. Item 2) of Assumption \ref{ass: ass_1} is intended to make the integral form of 
$V$ diverge in a neighborhood of $\partial\Ds$, where the integrand also diverges, so that when the trajectory reaches the boundary of $\Ds$, the value of 
$V$ remains compatible with the terminal cost $q$. Under these assumptions, $V$ is a maximal Lyapunov function on 
$\D$ in the sense that $\D = \set{x\in\Real^n: V(x)<\infty}$, with the following properties.

\begin{thm}
    The function $V:\Real^n\ra\Real\cup\set{+\infty}$ defined by \eqref{E: V} satisfies the following properties:
    \begin{enumerate}
        \item $V(x)<\infty$ if and only if $x\in\D$;
        \item $V(x)\ra\infty$ as $x\ra\partial\D$;
        \item $V$ is positive definite on $\D$;
        \item $V$ is continuous on $\D$ and its right-hand derivative along the solution of
        \eqref{eq:sys} satisfies
        $\dot{V} (x): = \lim_{t\ra 0^+}\frac{V(\phi(t;x))-V(x)}{t}= -\omega(x)$ for all $x\in\D$. 
    \end{enumerate}
\end{thm}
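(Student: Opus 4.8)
The plan is to obtain all four items from three ingredients --- the dynamic programming principle \eqref{E: V_DPP2} for $V$, Proposition~\ref{prop: tau_DPP}, and the structure of the terminal cost $q$ --- combined with Assumptions~\ref{ass: h}--\ref{ass: ass_1} and the facts established in \cite{meng2022smooth} that $\D$ is open and forward invariant and that $\D = \Dd \cap \set{x : \tau(x) = +\infty}$. The key preliminary observation is this: if $\tau(x) < \infty$, then $\set{t \geq 0 : \phi(t;x) \in U}$ is closed and nonempty, so $\phi(\tau(x);x) \in U$; since $\phi(t;x) \in \Ds$ for $t < \tau(x)$ and $\overline{\Ds} = \set{x : h(x) \leq 1}$ (because $\nabla h \neq 0$ on $\set{x : h(x) = 1}$), the exit point lies on $\partial\Ds$, whence $q(\phi(\tau(x);x)) = +\infty$ and $V(x) = +\infty$. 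Consequently, whenever $V(x)$ is finite we already have $\tau(x) = \infty$ and $V(x) = \int_0^\infty \omega(\phi(t;x))\,dt$ with no terminal term.

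\textbf{Items 1 and 3.} For the ``only if'' direction of item~1 the observation above handles $\tau(x) < \infty$; if instead $\tau(x) = \infty$ but $x \notin \Dd$, then by local asymptotic stability there is a ball $\B(0;\rho_0) \subset \Dd$ that the trajectory can never enter (else $x \in \Dd$), so $\norm{\phi(t;x)} \geq \rho_0$ for all $t$, and Assumption~\ref{ass: ass_1}(1) forces $\omega(\phi(t;x)) > c$ for all $t$, giving $V(x) = \infty$. For the converse, if $x \in \D$ the trajectory converges to the origin and hence reaches $\overline{\B(0;\rho/2)}$ at a finite time $T$, with $\rho$ as in Assumption~\ref{ass: ass_1}(3); \eqref{E: V_DPP2} then gives $V(x) = \int_0^T \omega(\phi(t;x))\,dt + V(\phi(T;x))$, where the integral is finite since $\set{\phi(t;x) : t \in [0,T]}$ is a compact subset of the open set $\Ds$ on which the continuous function $\omega$ is bounded, and $V(\phi(T;x)) < \infty$ by Assumption~\ref{ass: ass_1}(3). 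Item~3 follows immediately: $\phi(t;0) \equiv 0$ and $\omega(0) = 0$ give $V(0) = 0$, and for $x \in \D \setminus \set{0}$ the map $t \mapsto \omega(\phi(t;x))$ is continuous and equals $\omega(x) > 0$ at $t = 0$, so it stays bounded below by a positive constant on a short interval and $V(x) > 0$.

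\textbf{Item 4.} The derivative identity is read off the DPP: for $x \in \D$ we have $\tau(x) = \infty$ and $\phi(t;x) \in \D$ by forward invariance, so \eqref{E: V_DPP2} gives $V(\phi(t;x)) - V(x) = -\int_0^t \omega(\phi(s;x))\,ds$; dividing by $t$ and letting $t \to 0^+$ yields $\dot V(x) = -\omega(x)$ by continuity of the integrand. For continuity of $V$ on $\D$, fix $x \in \D$ and $\eps > 0$, choose $\delta_\eps > 0$ with $\B(0;\delta_\eps) \subset \D$ and $V < \eps$ on $\B(0;\delta_\eps)$ (Assumption~\ref{ass: ass_1}(4)), and let $T$ be the first time $\norm{\phi(t;x)} \leq \delta_\eps/2$. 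Applying \eqref{E: V_DPP2} at time $T$ to $x$ and to every $y$ in a small closed ball $\overline{N} \subset \D$ about $x$, $V(y) = \int_0^T \omega(\phi(s;y))\,ds + V(\phi(T;y))$; as $y \to x$ the integral converges to $\int_0^T \omega(\phi(s;x))\,ds$ by dominated convergence --- all the pieces $\set{\phi(s;y) : s \in [0,T],\, y \in \overline{N}}$ lie in one compact subset of $\Ds$, where $\omega$ is bounded --- and $\phi(T;y) \in \B(0;\delta_\eps)$ so $0 \leq V(\phi(T;y)) < \eps$. Hence $V(x)$, $\liminf_{y \to x} V(y)$ and $\limsup_{y \to x} V(y)$ all lie in $[\,\int_0^T \omega(\phi(s;x))\,ds,\ \int_0^T \omega(\phi(s;x))\,ds + \eps\,]$, and since $\eps$ is arbitrary $V$ is continuous at $x$.

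\textbf{Item 2 (the main obstacle).} Since $\D$ is open, $\bar x \in \partial\D$ implies $\bar x \notin \D$ and (item~1) $V(\bar x) = +\infty$; I must show $V(x_m) \to \infty$ along any $x_m \in \D$ with $x_m \to \bar x$, where $V(x_m) = \int_0^\infty \omega(\phi(t;x_m))\,dt$ because $\tau(x_m) = \infty$. Since $x_m \in \Ds$ and $\Ds$ is open, $\bar x \in \overline{\Ds}$, and there are three exhaustive cases. \emph{(A)} $\bar x \in \partial\Ds$, so $\norm{x_m}_{\partial\Ds} \to 0$: combining $\omega(\phi(t;x_m)) \gtrsim \psi(\norm{\phi(t;x_m)}_{\partial\Ds})$ (Assumption~\ref{ass: ass_1}(2)) with the $1$-Lipschitz bound $\norm{\phi(t;x_m)}_{\partial\Ds} \leq \norm{x_m}_{\partial\Ds} + Lt$ ($L$ bounding $\norm{f}$ near $\partial\Ds$) and monotonicity of $\psi$, one gets $V(x_m) \gtrsim \tfrac{1}{L}\int_{\norm{x_m}_{\partial\Ds}}^{\,\norm{x_m}_{\partial\Ds}+L\eps} \psi(u)\,du \to +\infty$, using that $\int_0^\delta \psi(u)\,du = \infty$ for every $\delta > 0$ (a consequence of $\int_0^r \psi(x)\,dx = \infty$ and monotonicity). \emph{(B)} $\bar x \in \Ds$ with $\tau(\bar x) \in (0,\infty)$: then $\phi(\tau(\bar x);\bar x) \in \partial\Ds$, so $\norm{\phi(t;\bar x)}_{\partial\Ds} \leq L(\tau(\bar x)-t)$ near $\tau(\bar x)$, the same $\psi$-estimate gives $\int_{\tau(\bar x)-\eps}^{\tau(\bar x)} \omega(\phi(t;\bar x))\,dt = \infty$, and since by continuous dependence $\omega(\phi(t;x_m)) \to \omega(\phi(t;\bar x))$ pointwise for $t < \tau(\bar x)$, Fatou's lemma gives $\liminf_m V(x_m) \geq \int_{\tau(\bar x)-\eps}^{\tau(\bar x)} \omega(\phi(t;\bar x))\,dt = \infty$. \emph{(C)} $\bar x \in \Ds$ with $\tau(\bar x) = \infty$ and (necessarily) $\bar x \notin \Dd$: then $\norm{\phi(t;\bar x)} \geq \rho_0$ for all $t$, so $\omega(\phi(t;\bar x)) > c$, and Fatou again gives $\liminf_m V(x_m) \geq \int_0^T \omega(\phi(t;\bar x))\,dt \geq cT$ for all $T$. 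The hard part is exactly item~2: making the near-boundary blow-up rigorous (the singularity of $\psi$ at $0$ must be genuinely non-integrable so the integral along the limiting trajectory truly diverges), transferring these lower bounds from $\phi(\cdot;\bar x)$ to $\phi(\cdot;x_m)$ through continuous dependence and Fatou while tracking where each trajectory sits relative to $\Ds$, and verifying that the two escape mechanisms --- exit through $\partial\Ds$ versus failure of convergence --- are jointly covered by (A)--(C).
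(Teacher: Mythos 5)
Your proposal is correct, and items 1, 3 and 4 follow essentially the same route as the paper (which delegates the finiteness, positive-definiteness, continuity and derivative arguments to the corresponding steps of \cite{liu2023towards,liu2025physics}; you simply spell them out). The genuine difference is in item 2. The paper writes $\partial\D=(\partial\Dd\cap\overline S)\cup(\partial S\cap\overline{\Dd})$ with $S=\set{\tau=\infty}$, handles $y\in\partial\Dd$ by citation, and for $y\in\partial S\cap\Dd$ works directly on the approximating trajectories: it introduces $T_\delta$, a local Lyapunov function and a forward-invariant sublevel set to show $\tau(y)<T_\delta(y)<\infty$, then bounds $\|\phi(t;x_m)\|_{\partial\Ds}\le\norm{f}(t-\tau(y))+\eps_m$ on a window just \emph{after} $\tau(y)$ and takes a liminf of $\int\psi$. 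You instead split by where $\bar x$ sits ($\partial\Ds$, versus inside $\Ds$ with finite $\tau$, versus inside $\Ds$ with $\tau=\infty$ but outside $\Dd$), establish divergence of $\int\omega(\phi(t;\bar x))\,dt$ along the \emph{limit} trajectory on the window just \emph{before} $\tau(\bar x)$ (or via Assumption~\ref{ass: ass_1}(1) in the non-attraction case), and transfer to $V(x_m)$ by Fatou and continuous dependence; only your case (A) uses a direct estimate on $\phi(\cdot;x_m)$. The core mechanism (non-integrability of $\psi$ at $0$ combined with the Lipschitz rate $\norm{f}$ at which the flow can approach or leave $\partial\Ds$) is identical, but your organization avoids the $T_\delta$-continuity and local-Lyapunov machinery, and your three cases are exhaustive and handle the point $\bar x\in\partial\Ds\cap\Dd$ explicitly rather than implicitly. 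Two small points to tighten: your preliminary claim that the exit point lies on $\partial\Ds$ is unnecessary (and fails when $\tau(x)=0$ with $x$ in the interior of $U$) --- it suffices that $q=\infty$ on all of $U\subseteq\partial\Ds\cup(\Real^n\setminus\Ds)$; and in case (A) you should fix $\eps$ small enough that $\norm{x_m}_{\partial\Ds}+L\eps\le r$ so that Assumption~\ref{ass: ass_1}(2) actually applies along the whole window.
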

\begin{proof} (1) Suppose $x\in\D$. Then $\tau(x) = \infty$, and by \eqref{E: V_DPP2}, $V(x) = \int_0^t\omega(\phi(s;x))ds +V(\phi(t;x))$ for all $t\geq 0$. The finiteness of $V$ in this cases can be proved in the same way as in \cite[Proposition 1-1)]{liu2023towards}. Now suppose $x\notin\D$. Then either $\tau(x)<\infty$, which directly implies $V(x)=\infty$, or $x\in \Dd^c\cap\{x\in\Real^n:\tau(x)=\infty\}$. For the latter case, since $\Dd$ is open, there exists $\delta > 0$ such that $B(0;\delta) \subset \Dd$. Consequently, $\phi(t;x) \notin B(0;\delta)$ for all $t \ge 0$, and by Assumption \ref{ass: ass_1}-(1),  $V(x)=\infty$.  

(2) Denote $S = \{x\in\Real^n: \tau(x) = \infty\}$.Then $\partial\D = (\partial\Dd\cap \overline{S})\cup(\partial S\cap \overline{\Dd})$. For any $y\in\partial \Dd$, the proof proceeds in the same way as in \cite[Proposition 1-2)]{liu2023towards}. It suffices to show the case where   $y\in\partial S\cap \Dd\subseteq\partial\D$. Note that $\tau(y)<\infty$ in this case, since otherwise $y\in\D$ by definition, which cannot be true because $\D$ is open \cite{meng2022smooth}.

Define $T_\delta(x) = \inf\set{t\geq 0: \|\phi(t;x)\|<\delta}$ for any $\delta>0$. Due to the asymptotic stability of the origin, there exists a local Lyapunov function $v$ such that $\dot{v}<0$ in some neighborhood of $0$, and, for all sufficiently small $\mu>0$,  the sublevel set $\set{x: v(x)\leq \mu}$ is forward invariant and satisfies $\set{x: v(x)\leq \mu}\cap U = \emptyset$. We choose $\delta$ sufficiently small such  that $\B(0;\delta)\subset\set{x: v(x)\leq \mu}$ and $\delta$ also satisfies condition (4) of Assumption~\ref{ass: ass_1}. 
Consider $\set{x_m}\subseteq\D$ and $x_m\ra y$. Since $T_\delta(x)<\infty$ for all $x\in\Dd$, the continuity of $T_\delta$ within $\Dd$ also holds. Therefore, $T_\delta(x_m)\ra T_\delta(y)<\infty$, and $\tau(\phi(T_\delta(y);y))=\infty$ as $\phi(T_\delta(y);y)$ lies in  a forward invariant set that   does not intersect $U$. The latter also implies that $\tau(y)< T_\delta(y)$. Indeed, if the opposite holds, i.e. $\tau(y)>T_\delta(y)$ (the equality cannot occur due to the empty intersection of the two corresponding sets),    we would have $\phi(t; \phi(T_\delta(y);y))\notin U$ for all $t\geq 0$,  which indicates $\tau(y)=\infty$ and renders a contradiction.  

Now, by \eqref{E: V_DPP2}, 
\begin{equation*}
    \begin{split}
        V(x_m) & = \int_0^{T_\delta(x_m)}\omega(\phi(s;x))ds + V(\phi(T_\delta(x_m);x_m)),
    \end{split}
\end{equation*}
where $V(\phi(T_\delta(x_m);x_m))<\eps$  for all $m$ by (4) of Assumption \eqref{ass: ass_1}. Since $T_\delta(x_m)\ra T_\delta(y)>\tau(y)$, there exists an $m_0$ so that for all $m\geq m_0$, $\tau(y)<T_\delta(x_m)$. Let $r$ be given as in (2) of  Assumption \ref{ass: ass_1}.  Pick an arbitrarily small $\delta_t\in(0, \inf_{m\geq m_0}T_\delta(x_{m})-\tau(y))$ so that for all $t\in(\tau(y), \tau(y)+\delta_t)$, 
\begin{equation*}
    \begin{split}
      &  \|\phi(t;x_m)\|_{\partial\Ds} \\ \leq &\norm{\phi(t;x_m)-\phi(\tau(y);x_m)} + \norm{\phi(\tau(y);x_m)-\phi(\tau(y);y)}\\
      \leq & \norm{f}(t-\tau(y)) +\eps_m  \\
      \leq & \norm{f}\delta_t +\eps_m
         \leq   r,
    \end{split}
\end{equation*}
where $\eps_m:=\|\phi(\tau(y);x_m)-\phi(\tau(y);y)\|\ra 0$. Then, by 2) of Assumption \ref{ass: ass_1}, $\omega(\phi(t; x_m))\gtrsim \psi(\|\phi(t;x_m)\|_{\partial\Ds})\gtrsim\psi(\norm{f}(t-\tau(y))+\eps_m)$ for all $t\in(\tau(y), \tau(y)+\delta_t)$. Then 
\begin{equation}
    \begin{split}
         V(x_m)  \geq & \int_{\tau(y)}^{\tau(y)+\delta_t}\omega(\phi(s;x_m))ds\\
        \gtrsim & \int_{\tau(y)}^{\tau(y)+\delta_t}\psi(\|f\|(s-\tau(y))+\eps_m)ds\\
        \gtrsim &  \int_{\eps_m}^{\eps_m+\norm{f}\delta_t}\psi(x)dx, 
    \end{split}
\end{equation}
and $\liminf_{m\ra\infty}V(x_m)\gtrsim \int_{0}^{\norm{f}\delta_t}\psi(x)dx$. Hence, $V(x_m)\ra\infty$ by (2) of Assumption \ref{ass: ass_1}.  

(3) and (4) proceed exactly the same as \cite[Proposition 1]{liu2025physics}  with the only exception that the working domain is $\D$ instead of $\Dd$\footnote{We also change the notion slightly to be consistent with the notation used in this paper.}. 
\end{proof}

The following example illustrates why item 2) of Assumption \ref{ass: ass_1} is needed to keep the values of $V(x)$ in the neighborhood of $\partial\Ds$ compatible with $q$.

\begin{ex}
    We show a counterexample where $\omega$ diverges near the boundary but $V(x_m)\not\to \infty$ for $x_m\to y\in\partial\D$. Consider $\dot{x} = -x$, and $T_\delta(x) = \inf\set{t\geq 0: |\phi(t;x)|<\delta}$. It can be verified that  $T_\delta(x) = \log\left(\frac{|x|}{\delta}\right)(1-\mathds{1}_{ [-\delta, \delta]}(x))$.
    
    Now let $\delta = 0.1$, $U = (-\infty, -1]\cup[1, \infty)$, $\omega(x) = -x^2(\log(1-x^2))$, and $x_m = 1-1/m$ for $m\geq 2$.  We also denote $t_m := T_{\delta}(x_m)$. Then, $\{x\in\Real: \tau(x) = \infty\} = (-1, 1)$ and $\D = \Real\cap (-1, 1) = (-1, 1)$ as well. Let 
    \begin{equation}
        I(x_m) =  \int_0^{t_m}\omega(\phi(s;x))ds. 
    \end{equation}
    Then, $\lim_{m\ra\infty}I(x_m) \approx 0.5$ and  $\phi(t_m;x_m) =   0.1$ for all $m\geq 2$. Therefore, $\tau(\phi(t_m;x_m)) = \infty$, $\phi(t_m;x_m)\in\D$,  
   and  $V(\phi(t_m;x_m)) = C$ ($\approx 2.5\times 10^{-5}$) for all $m\geq 2$.  Consequently,  $V(x_m) = I(x_m) + V(\phi(t_m;x_m))$, and $\lim_{m\ra\infty}V(x_m) <\infty$, whereas $V(1) = \infty$ by definition. 

   On the other hand, consider $h(x) = x^2$. Then $U^c = \set{x\in\Real: h(x)<1}$. Choosing $\omega(x) = \frac{x^2}{1-h(x)}$, one can verify that $\omega(x)\gtrsim 1/\norm{x}_U$ in the neighborhood of $\partial U$, and that $V(x_m) = \int_0^\infty\omega(\phi(t;x_m))dt = -\frac{\log(1-x_m^2)}{2}$, and $V(x_m)\ra\infty$ as $m\ra \infty$.  \qed
\end{ex}

Based on Theorem 1, and by similar reasoning as in \cite[Proposition. 2]{liu2025physics}, 
one can also show that $V$ in \eqref{E: V} is the unique continuous solution (in the viscosity sense) to 
\begin{equation}\label{E: lyap}
   - \nabla V(x)\cdot f(x)  -\omega(x) = 0, \;V(0) = 0, 
\end{equation}
in the general case, and that it is the unique $\mathcal{C}^1(\Omega)$ solution if $\omega\in\mathcal{C}^1(\Omega)$, where $\Omega\subset\D$ is any open set containing the origin. We omit the proof due to page limitation. 
\begin{rem}
    It is worth noting that $V$ itself is already a proper indicator for the origin on $\D$ by this construction. Moreover, one can find   class $\mathcal{K}_\infty$ functions $\underline{\alpha}$, $ \overline{\alpha}$, and $\alpha_3 = \underline{\alpha}^{-1}$,  such that $\underline{\alpha}(\omega(x))\leq V(x)\leq \overline{\alpha}(\omega(x))$. If $V$ is sufficiently smooth, then $\nabla V(x) \cdot f(x) = -\omega(x) \le -\alpha_3(V(x))$, which satisfies the conditions of Theorem~\ref{thm:conv1}. \qed
\end{rem}

\subsection{Lyapunov–barrier Function via Zubov Equation and Dirichlet Boundary Value Problem}
Although a maximal Lyapunov function $V$  on $\D$ can be obtained by solving \eqref{E: lyap}, it is unbounded and only well defined on $\D$. Therefore, solving it   requires working on a subregion of $\D$ to ensure $V$ remains bounded, which is typically unknown a priori. This  often yields a conservative estimate of $\D$ and a Lyapunov function valid only locally, owing to the aforementioned properties, thereby making the approach unsuitable for learning-based methods.  We are thus motivated to seek a transformation of 
$V$ that solves a corresponding PDE well-defined on an extended domain containing 
$\D$, so as to facilitate the learning process.

Following the idea of Zubov’s theorem, we apply a standard transformation of $V$ to obtain a scaled function $W$, such that the $\D = \{x\in\Real^n: W(x)<1\}$ and $\Real^n\setminus\D = \{x\in\Real^n: W(x)=1\}$.  The procedure follows \cite[Section 3.2]{liu2025physics}; however, we include it here for completeness. The transformation function $\beta: [0, \infty) \to \Real$ satisfies $\dot{\beta} = (1 - \beta)\varphi(\beta)$ with $\beta(0) = 0$, where $\varphi(s) > 0$ for all $s \ge 0$. Moreover, there exists a nonempty interval $I$ such that the function $G: I \to \Real$ defined by $s \mapsto (1 - s)\varphi(s)$ is monotonically decreasing on $I$.  It can be verified that $\beta$ is strictly increasing and that $\beta(s)\ra 1$ as $s\ra\infty$. 

\begin{rem}
    We commonly consider two straightforward choices of $\varphi(s) = \alpha$ and $\varphi(s) = \alpha(1+s)$ for some $\alpha>0$,  which result in  $\beta(s) = 1-e^{-\alpha s}$ and $\beta(s) = \tanh(\alpha s)$, respectively.\qed 
\end{rem}

With $V$ that satisfies \eqref{E: lyap}, let  \begin{equation}\label{E: W}
W(x) =
\begin{cases}
\displaystyle \beta(V(x)), & x\in\D, \\[1.2ex]
\displaystyle 1, & \text{otherwise}.
\end{cases}
\end{equation}
Then, by \cite[Chapter II, Proposition 2.5]{bardi1997optimal}, $W$  is the unique viscosity solution to  the Zubov equation
\begin{equation}\label{E: zubov_in}
   - \nabla W(x) \cdot f(x) + (1-W(x))\varphi(\beta(V(x))\omega(x) = 0  
\end{equation}
in $\D$, with $W(0) = 0$. With the properties of $V$ stated in Theorem \ref{thm:conv1} and the construction of $\beta$, $W(x)\ra 1$ as $x\ra\partial\D$. On the other hand, $W(x) = 1$ for all $x\in\Ds\setminus\D$ is a trivial solution to \eqref{E: zubov_in}.

With a boundary condition $g(x) = 1$ for all $x\in\partial\Ds$, we  can then work on the following Dirichlet boundary value problem for the Zubov equation  defined on $\Ds$, namely a \emph{Dirichlet-Zubov} formulation,
\begin{equation}\label{E: zubov}
\begin{cases}
\displaystyle - \nabla W(x) \cdot f(x) + (1-W(x))\varphi(\beta(V(x))\omega(x) = 0; \\[1.2ex]
\displaystyle W(0) = 0;  \;   W(x) = g(x), \;x\in\partial\Ds. 
\end{cases}
\end{equation}
Following a similar procedure as in     \cite[Theorem 2]{liu2025physics}, $W$ is the unique viscosity solution to \eqref{E: zubov}. 

\subsection{Equivalent Formulation of the Dirichlet–Zubov Problem} 

The solvability of \eqref{E: zubov} remains restricted to the domain 
$\Ds$ due to the definition of $\omega$. Moreover, the divergence rate of the integral \eqref{E: V} near the boundary layers of $\partial\Ds$ and 
$\partial\Dd$ may differ owing to $\omega$, resulting in a non-uniform convergence rate of $W(x)$ to $1$ as $x\ra\partial\D$. We are therefore further motivated to modify the equation to ensure its solvability on an extended domain.

\begin{prop}
  Let $\Omega\subseteq\Real^n$ strictly contain $\Ds$.   Suppose $\omega$ satisfies Assumption \ref{ass: ass_1} and is of the form given in  \eqref{E: omega_1}, i.e., $\omega = \theta(x)\frac{\|x\|^2}{(1-h(x))^k}$ with  $k\geq 1$ and $\theta\in \mathcal{C}_b^1(\Omega;\Real_+)$ as in Example 
\ref{eg: indicator_2}. Let 
\begin{equation}\label{E: tilde_f}
    \tilde{f}(x) = f(x)\cdot \frac{(1-h(x))^k}{\theta(x)}. 
\end{equation}
Then, $\D$ coincides with the domain of attraction $\tilde{\D}_0$ of the system $\dot{x} = \tilde{f}(x)$. 
\end{prop}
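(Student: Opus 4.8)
The plan is to exploit that on $\Ds$ one has $\tilde f = \mu\, f$, where $\mu(x) := (1-h(x))^{k}/\theta(x)$ is a \emph{strictly positive} scalar function on $\Ds$ (positive since $1-h>0$ on $\Ds$ by Assumption~\ref{ass: h} and $\theta>0$). Multiplying a vector field by a positive scalar function is a state-dependent time reparametrization, so $\dx=\tilde f(x)$ and \eqref{eq:sys} have the \emph{same orbits} inside $\Ds$, merely traversed at different speeds; this reduces the claim to a careful comparison of convergence along reparametrized time. The second structural fact I would use is that $\mu$ vanishes on $\partial\Ds=\set{x:h(x)=1}$ — here $k\ge 1$ is essential — so every boundary point of $\Ds$ is an equilibrium of $\tilde f$, which makes $\partial\Ds$ an impenetrable barrier for the $\tilde f$-flow. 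I would record once that $\overline{\Ds}$ is compact (Assumption~\ref{ass: h}), hence $\mu\le L$ on $\overline{\Ds}$ for some $L>0$ and $\mu\ge c>0$ on every compact subset of $\Ds$, and that $\tilde f$ is locally Lipschitz on $\Omega$, so its solutions are unique.

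First I would make the time change precise: for $x_0\in\Ds$ let $[0,T)$ be the maximal interval on which $\phi(\cdot\,;x_0)$ stays in $\Ds$ and set $s(t)=\int_0^t \mu(\phi(\sigma;x_0))^{-1}d\sigma$; then $s(\cdot)$ is a strictly increasing $C^1$ bijection of $[0,T)$ onto some $[0,S)$, its inverse $t(\cdot)$ solves $t'(s)=\mu(\phi(t(s);x_0))$, and differentiating shows $\tilde\phi(s;x_0)=\phi(t(s);x_0)$ on $[0,S)$ by uniqueness. Since $\mu\le L$ gives $s(t)\ge t/L$, $T=\infty$ forces $S=\infty$. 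For the inclusion $\D\subseteq\tilde{\D}_0$: if $x_0\in\D$ then, by forward invariance and openness of $\D$ (established above), $\R(x_0)\cup\set{0}$ is a compact subset of $\D\subset\Ds$, so $T=\infty$ and $\mu\ge c>0$ along the orbit; hence $S=\infty$, $t'(s)\ge c$, $t(s)\to\infty$, and $\tilde\phi(s;x_0)=\phi(t(s);x_0)\to 0$, i.e.\ $x_0\in\tilde{\D}_0$.

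For $\tilde{\D}_0\subseteq\D$, take $x_0\in\tilde{\D}_0$ with $x_0\ne 0$ (the case $x_0=0\in\D$ is trivial). If $h(x_0)\ge 1$, the $\tilde f$-flow from $x_0$ cannot cross the equilibrium set $\partial\Ds$, so it remains in $\set{x:h(x)\ge 1}$, which does not contain $0$, contradicting $\tilde\phi(s;x_0)\to 0$; hence $x_0\in\Ds$. The $\tilde f$-orbit from $x_0$ then cannot leave $\Ds$ in finite time — its first exit point would lie on $\partial\Ds$, an equilibrium $\ne 0$, and the solution would freeze there, again contradicting convergence to $0$ — so it stays in $\Ds$, and being convergent it stays in a compact subset of $\Ds$, whence its maximal interval is $[0,\infty)$. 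Applying the time change on all of $[0,\infty)$, $\tilde\phi(s;x_0)=\phi(t(s);x_0)$ with $t(\cdot)$ increasing; if $t(\infty):=\lim_{s\to\infty}t(s)<\infty$ then $\phi(t(\infty);x_0)=0$, forcing $x_0=0$ by backward uniqueness, a contradiction, so $t(s)\to\infty$. Therefore $\phi(\cdot\,;x_0)$ exists and remains in $\Ds$ for all $t\ge 0$ (so $\tau(x_0)=\infty$) and $\phi(t;x_0)\to 0$; that is, $x_0\in\D$. Combining the two inclusions gives $\D=\tilde{\D}_0$.

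I expect the main obstacle to be the boundary analysis in the second inclusion: rigorously ruling out that the reparametrized flow escapes $\Ds$ through $\partial\Ds$, which hinges on $\tilde f$ being locally Lipschitz and identically zero on $\partial\Ds$ so that $\partial\Ds$ is genuinely a wall of equilibria, together with the bookkeeping of the time change near the boundary (where the new time $s$ may stay finite while the original parameter $t$ is finite, or conversely). A lesser technical point is ensuring $\tilde f$ is well defined and regular on all of $\Real^n$ — e.g.\ taking $k$ a positive integer so that $(1-h)^k$ is $C^1$ even where $1-h<0$ — which is needed only so that ``$\tilde{\D}_0$'' is unambiguous, since no orbit relevant to the identity $\D=\tilde{\D}_0$ ever reaches $\set{x:h(x)>1}$.
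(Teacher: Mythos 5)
Your proof is correct and follows essentially the same route as the paper's: both treat $\tilde f=\mu f$ with $\mu=(1-h)^k/\theta$ as a state-dependent time reparametrization, exploit that $\mu>0$ on $\Ds$ and $\mu=0$ on $\partial\Ds$ so that $\tilde f$-orbits cannot cross the boundary, and then check in each direction that the reparametrized time tends to infinity so that convergence to the origin is preserved. Your version is in places slightly more careful than the paper's (explicitly excluding initial points with $h(x_0)\ge 1$, and using backward uniqueness at the origin to rule out $t(s)$ staying bounded, where the paper instead invokes eventual entry into a compact forward-invariant set on which $\mu\ge c>0$), but these are refinements of the same argument rather than a different approach.
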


\begin{proof}
  For simplicity, let $\lambda(x) :=  \frac{(1-h(x))^k}{\theta(x)}$.    By Assumption \ref{ass: h}, $1-h(x)>0$ for all $x\in \Ds$, and $1-h(x)$ its maximum within $\Ds$ by the same reasoning as in Example~\ref{eg: indicator_2}. Therefore, $\lambda(x)>0$ for all $x\in \Ds$ and $\lambda(x) = 0$ for $x\in\partial\Ds$. By Assumption \ref{ass: h} and \ref{ass: ass_1}, the origin of $\dot{x} = \tilde{f}(x)$ is also asymptotic stable.  Denote $\psi(t;x_0)$ by the solution to $\dot{x} = \tilde{f}(x)$ with the initial condition $x(0) = x_0$.  
  
For all $x\in\tilde{\D}_0$, we have $\psi(t;x) \in \Ds$ for all $t\geq 0$.  Otherwise, there would exist some $T(x)\in(0, \infty)$ for some $x$ such that $\psi(T(x);x)\in\partial\Ds$ with $\psi(T(x);x)\neq 0$  and $\tilde{f}(\psi(t;x))=0$ for all $t\geq T(x)$. This implies $\psi(t;x)\neq 0$ for all $t\geq T(x)$,  which contradicts the asymptotic stability. Thus, $\lambda(\psi(t;x))>0$ for all $t\geq 0$ and all $x\in\tilde{\D}_0$.  For $s(t)$ such that 
  \begin{equation}
      \dot{s}(t) = \lambda(\psi(t);x)), \;s(0) = 0, 
  \end{equation}
$s(t)$ is strictly increasing.  Additionally, similar to the proof of 2) in Theorem~\ref{thm:conv1}, we can define $\tilde{T}_\delta(x) = \inf\set{t \ge 0 : \|\psi(t; x)\| < \delta} < \infty$ for a sufficiently small $\delta > 0$ such that $\B(0; \delta)$ is contained in a   closed, forward-invariant set $A$ with $A\cap U = \emptyset$. In this case, for all $t \ge \tilde{T}_\delta(x)$, we have $\psi(t; x) \in A$. Since $A$ is closed and $\Ds$ is open and bounded, there exists a $c > 0$ such that $\lambda(x) > c$ for all $x \in A$. It follows that $s(t) \geq \int_{\tilde{T}_\delta(x)}^t \lambda(\psi(s;x))ds\ra \infty$ as $t\ra\infty$. By the properties of $s$, the inverse $\sigma = s^{-1}$ is well defined. Moreover, for   $t = \sigma(s)$, we have $\sigma'(s) = 1 / \lambda(\psi(\sigma(s); x))$. Therefore,
$\dot{\psi}(\sigma(s); x) = \lambda(\psi(\sigma(s); x)) f(\psi(\sigma(s); x)) \sigma'(s) = f(\psi(\sigma(s); x))$.
By the uniqueness of the solution to the IVP, we have $\phi(s; x) = \psi(\sigma(s); x)$, and equivalently, $\psi(t; x) = \phi(s(t); x)$. Therefore, $\phi(s(t); x) \in \D$ by the properties of $\psi(t; x)$ and $s(t)$.

Conversely,  for all $x\in\D$, we define $\eta_x(s) = \lambda(\phi(s;x))$ for all $s\in[0, \infty)$. Then, $\eta_x(s)>0$ for all $s\geq 0$. One can verify that there exists a strictly increasing function $s_x(t)$, denoted by $s(t)$ for simplicity, which satisfies $\dot{s}(t) = \eta_x(s(t))$ with $s(0) = 0$. Note that, by Assumption~\ref{ass: h}, $\eta_x$ is uniformly bounded for all $x \in \D$. Hence, the scalar ODE for $s(t)$ has a linear growth bound, implying that $s(t)$ exists on $[0, \infty)$. Furthermore, by the uniform attraction property of $\phi(t; x)$, and with the same reasoning as above, we have $s(t) \to \infty$ as $t \to \infty$. Thus, again, we have $\psi(t; x) = \phi(s(t); x)$, and $\|\psi(t; x)\| \to 0$ as $t\ra\infty$.
\end{proof}

\begin{thm}\label{thm: modified}
     Let $\Omega\subseteq\Real^n$  be a compact region of interest (ROI) such that $\Ds\subseteq\Omega$.  Let $\tilde{f}$  be defined in \eqref{E: tilde_f} and $\psi(t;x_0)$ be the solution to $\dot x = \tilde{f}(x)$ with $x(0) = x_0$. Then, 
     \begin{enumerate}
         \item   $\tilde{V}(x) = \int_0^\infty\|\psi(t;x)\|^2dt$ is the the unique viscosity solution to $-\nabla \tilde{V}(x)\cdot \tilde{f}(x)-\norm{x}^2 = 0$ on $\D$ and $\tilde{V}(x) = V(x)$  for all $x\in\D$. 
         \item $\tilde{W}(x) = \beta(\tilde{V}(x))$ for $x\in\D$ and  $\tilde{W}(x) = 1$ elsewhere is the unique viscosity solution to
         \begin{equation}\label{E: zubov2}
\begin{cases}
\displaystyle   -\nabla\tilde{W}(x)\cdot \tilde{f}(x) + (1- \tilde{W}(x))\varphi(\beta(\tilde{V}(x))\norm{x}^2 = 0; \\[1.2ex]
\displaystyle \tilde{W}(0) = 0;  \;   \tilde{W}(x) = 1, \;x\in\partial\Omega. 
\end{cases}
\end{equation}
Furthermore $\tilde{W}(x) = W(x)$ for all $x\in\Ds$. 
     \end{enumerate}
\end{thm}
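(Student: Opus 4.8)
The plan is to transfer everything already established about $V$ and $W$ for $\dx = f(x)$ to $\tilde V$ and $\tilde W$ for $\dx = \tilde f(x)$, using two elementary facts: the time‑rescaling identity from the preceding proposition, and the invariance of viscosity solutions under multiplication of the equation by a strictly positive continuous factor. Writing $\lambda(x):=(1-h(x))^k/\theta(x)$, we have $\tilde f=\lambda f$ with $\lambda>0$ on $\Ds\supseteq\D$, and $\lambda\,\omega=\norm{x}^2$ by the form $\omega=\theta\norm{x}^2/(1-h)^k$; hence multiplying \eqref{E: lyap} and \eqref{E: zubov_in} through by $\lambda$ converts them into exactly the equations appearing in Theorem~\ref{thm: modified}. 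The only genuinely new feature is that $\tilde V$, $\tilde W$, and the associated equations now live on the enlarged (but still relatively compact) domain $\Omega$ rather than on $\Ds$.

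\emph{Part 1.} First I would prove $\tilde V=V$ on $\D$. For $x\in\D$ we have $\tau(x)=\infty$, so by \eqref{E: V}, $V(x)=\int_0^\infty\omega(\phi(r;x))\,dr=\int_0^\infty\norm{\phi(r;x)}^2/\lambda(\phi(r;x))\,dr$. By the preceding proposition, $\psi(t;x)=\phi(s(t);x)$, where $s$ is the strictly increasing bijection of $[0,\infty)$ onto itself determined by $\dot s(t)=\lambda(\psi(t;x))$, $s(0)=0$; substituting $r=s(t)$, so that $dr=\lambda(\psi(t;x))\,dt$ and $r\to\infty$ as $t\to\infty$, gives
\[
V(x)=\int_0^\infty\frac{\norm{\psi(t;x)}^2}{\lambda(\psi(t;x))}\,\lambda(\psi(t;x))\,dt=\int_0^\infty\norm{\psi(t;x)}^2\,dt=\tilde V(x).
\]
Together with the proposition ($\D=\tilde{\D}_0$), this immediately transports all properties of $V$ established above — finiteness exactly on $\D$, positive definiteness and continuity on $\D$, and $\tilde V(x)\to\infty$ as $x\to\partial\D$ — to $\tilde V$. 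For the PDE, $V$ solves \eqref{E: lyap} in the viscosity sense on $\D$ (as shown above, following \cite[Prop.~2]{liu2025physics}); multiplying by $\lambda>0$ and using $\tilde f=\lambda f$, $\lambda\omega=\norm{x}^2$ shows that $\tilde V=V$ solves $-\nabla\tilde V\cdot\tilde f-\norm{x}^2=0$ on $\D$ with $\tilde V(0)=0$, and uniqueness among continuous viscosity solutions follows from the same comparison argument (whose structural hypotheses hold because $\tilde V$ is a proper indicator for the origin on $\D$).

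\emph{Part 2.} Since $\tilde V=V$ on $\D$, the definition \eqref{E: W} of $W$ and that of $\tilde W$ give $\tilde W=\beta(\tilde V)=\beta(V)=W$ on $\D$ and $\tilde W=1=W$ on $\Ds\setminus\D$, which is the asserted identity $\tilde W=W$ on $\Ds$. To check that $\tilde W$ solves \eqref{E: zubov2}: wherever $\tilde W\equiv1$ the equation holds classically, since $\nabla\tilde W$ and $1-\tilde W$ both vanish; on $\D$, $W$ solves \eqref{E: zubov_in} in the viscosity sense, and multiplying by $\lambda>0$ and substituting $\tilde f=\lambda f$, $\lambda\omega=\norm{x}^2$, $\tilde W=W$, $\tilde V=V$ turns it into the first line of \eqref{E: zubov2}; continuity of $\tilde W$ across $\partial\D$ follows from $\beta(\tilde V(x))\to\beta(\infty)=1$ as $x\to\partial\D$. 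Since $\D$ is open and contained in the compact set $\Omega$, we have $\D\subseteq\operatorname{int}\Omega$ and hence $\D\cap\partial\Omega=\emptyset$, so the boundary conditions $\tilde W(0)=\beta(0)=0$ and $\tilde W\equiv1$ on $\partial\Omega$ hold; uniqueness of $\tilde W$ among continuous viscosity solutions on $\Omega$ then follows from the comparison principle for the Zubov equation, exactly as in \cite[Theorem~2]{liu2025physics}, now posed on $\Omega$.

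\emph{Main obstacle.} The change of variables and the ``multiply by $\lambda$'' step are routine once the preceding proposition is in hand. The delicate point is the viscosity-solution bookkeeping across the interior interface $\partial\D$, where $\tilde V$ blows up and the Zubov nonlinearity $(1-\tilde W)\varphi(\beta(\tilde V))\norm{x}^2$ degenerates, together with obtaining uniqueness on all of the ROI $\Omega$ — not merely on $\D$ — from boundary data prescribed only on $\partial\Omega$; this requires adapting the comparison argument of \cite[Theorem~2]{liu2025physics} to the present setting, in which $\tilde f$ vanishes on $\partial\Ds$ and $\D$ is compactly contained in $\Omega$. A secondary point to verify carefully is that, for every $x\in\D$, the time change $r=s(t)$ genuinely maps $[0,\infty)$ onto $[0,\infty)$, which is precisely the content of the ``conversely'' part of the preceding proposition.
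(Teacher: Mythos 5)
Your proposal is correct and follows essentially the same route as the paper, whose entire proof is the observation that multiplying the original PDEs by the strictly positive factor $\lambda$ preserves viscosity sub- and supersolutions, so that with identical boundary data and uniqueness the conclusion follows. You simply make explicit the details the paper leaves implicit — the change of variables $r=s(t)$ giving $\tilde V=V$ on $\D$, the trivial verification on the region where $\tilde W\equiv 1$, and the need to re-run the comparison argument on the enlarged domain $\Omega$.
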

\begin{proof}
   The conclusion follows by multiplying the original PDEs by a positive factor, which yields exactly the PDEs for $\tilde{V}$ and $\tilde{W}$. Since multiplying a PDE by a strictly positive continuous function preserves viscosity sub- and supersolutions, both equations share the same set of solutions. With identical boundary data and uniqueness, the conclusion follows. 
\end{proof}

The modified \emph{Dirichlet–Zubov} problem \eqref{E: zubov2} ensures solvability over an extended ROI $\Omega$,  offering greater flexibility in the choice of domain compared with the original formulation.

\section{Numerical Examples}

In this section, we provide numerical examples to demonstrate the proposed approach. All numerical examples were conducted using a modified version of the Python toolbox LyZNet \cite{liu2024lyznet} for learning and verifying neural Lyapunov–barrier functions and the corresponding safe regions of attraction. All training and
verification tasks were performed on an Intel Xeon Gold 6326 CPU @ 2.90 GHz with 32 cores and 16 GB of RAM on a computing cluster. 

For all examples, we define obstacle sets a priori as $U_i := \{x\in\Real^n: h_i(x) \ge 1\}$ for some smooth functions $h_i$ and $i \ge 1$, where the $U:=\cup_i U_i$ is bounded. Let $\gamma(x) =1-h(x)$, where $h(x)=\max_i{h_i(x)}$. This is equivalent to specify $\Ds = \set{x\in\Real^n: h(x)<1}$. We consider $\omega(x)= \frac{\theta(x)\norm{x}^2}{\gamma(x)}$, where $\theta(x)$ is simply chosen as $1/\lambda$ for some parameter $\lambda>0$. This can be verified to be a proper indicator. In training the Lyapunov–barrier functions, we select a square ROI $\Omega\subseteq\Real^n$ and leverage Theorem~\ref{thm: modified} to train PINNs and solve  \eqref{E: zubov2} for $\tilde{W}$, where we set $\tilde{f}(x) = \lambda f(x)\gamma(x)$. We can then directly use LyZNet as if we were training the maximal Lyapunov function for the system $\dot x = \tilde{f}(x)$ as described in \cite{liu2025physics}. In the formal verification stage, the procedure follows exactly as in~\cite{liu2025physics}, except that we verify against the original vector field $f$, i.e., $\nabla \tilde{W} \cdot f$, instead of against $\tilde{f}$. For the examples below, we set $\lambda = 0.1$. Code for solving this and other examples is available at \url{https://git.uwaterloo.ca/hybrid-systems-lab/lyznet} under \texttt{examples/pinn-lbf}.

\begin{rem}
   It should be noted that alternative selections of $\gamma$ and $\theta$ are possible. For instance, $\gamma$ may be set to $\prod_i(1-h_i(x))$, while $\theta$ can be defined as a positive function of $x$. However, such $\theta(x)$ tends to be system-dependent, and an inappropriate choice may cause the trajectory evolution rate to vary considerably near obstacle boundaries and other regions, thereby complicating data generation. In contrast, the default choice presented above has shown robust performance across all tested case studies. \qed
\end{rem}

\begin{ex}[Reversed Van der Pol oscillator]
Consider the reversed Van der Pol oscillator
\begin{equation*}
\dot x_1 = -x_2, \quad 
\dot x_2 = x_1 - (1 - x_1^2)x_2, 
\end{equation*}
which has a stable equilibrium point at the origin. Consider $h_1(x) = 1 + \tfrac{1}{4} - \big((x_1 - 1)^2 + (x_2 - 1)^2\big) / 0.5^2$ and $h_2(x) = 1 + \tfrac{1}{4} - \big((x_1 + 1)^2 + (x_2 + 1)^2\big) / 0.5^2$, and sequentially add $U_i$ ($i=\{1, 2\}$) to the ROI $\Omega = [-2.5, 2.5] \times [-3.5, 3.5]$. By choosing $\beta(s) = \tanh(0.1s)$ and applying Theorem~\ref{thm: modified}, we visualize in Figure~\ref{fig:compare_qclf_data} the safe domain of attraction $\D$ for the single-obstacle ($h=h_1$) and two-obstacle ($h=\max_{i=1,2}h_i$) cases using $\{x: \tilde{W}(x)<1\}$, respectively. In contrast, Figure \ref{fig:verified-lbf-vpd} show the training results using PINNs and verification results by an SMT solver. Training for both cases took about 600 seconds. Verification took 700 seconds for the single-obstacle case and 300 seconds for the two-obstacle case.
\end{ex}

\begin{figure}[ht]
    \centering
    \includegraphics[scale = 0.38]{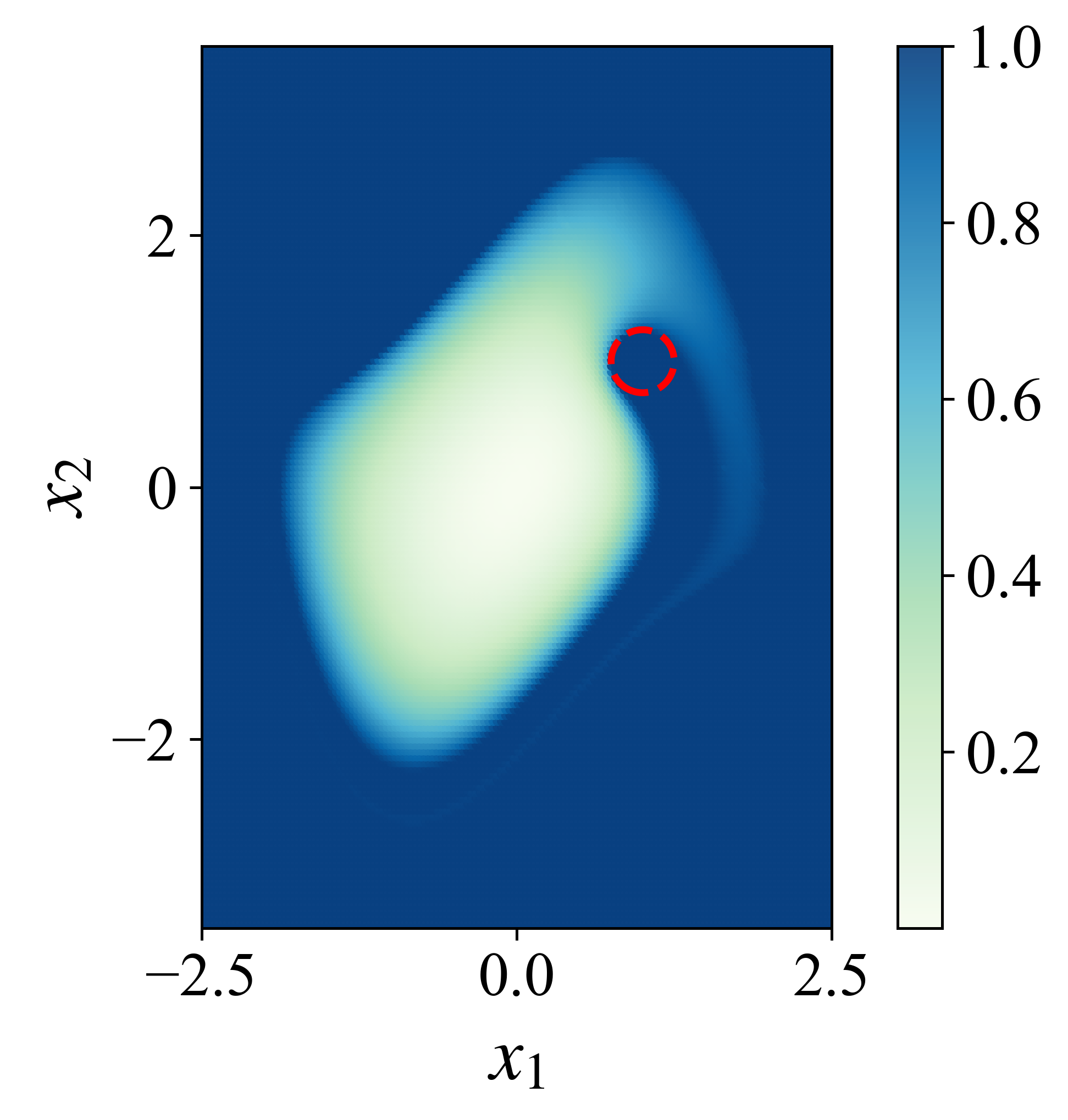}
    \includegraphics[scale = 0.38]{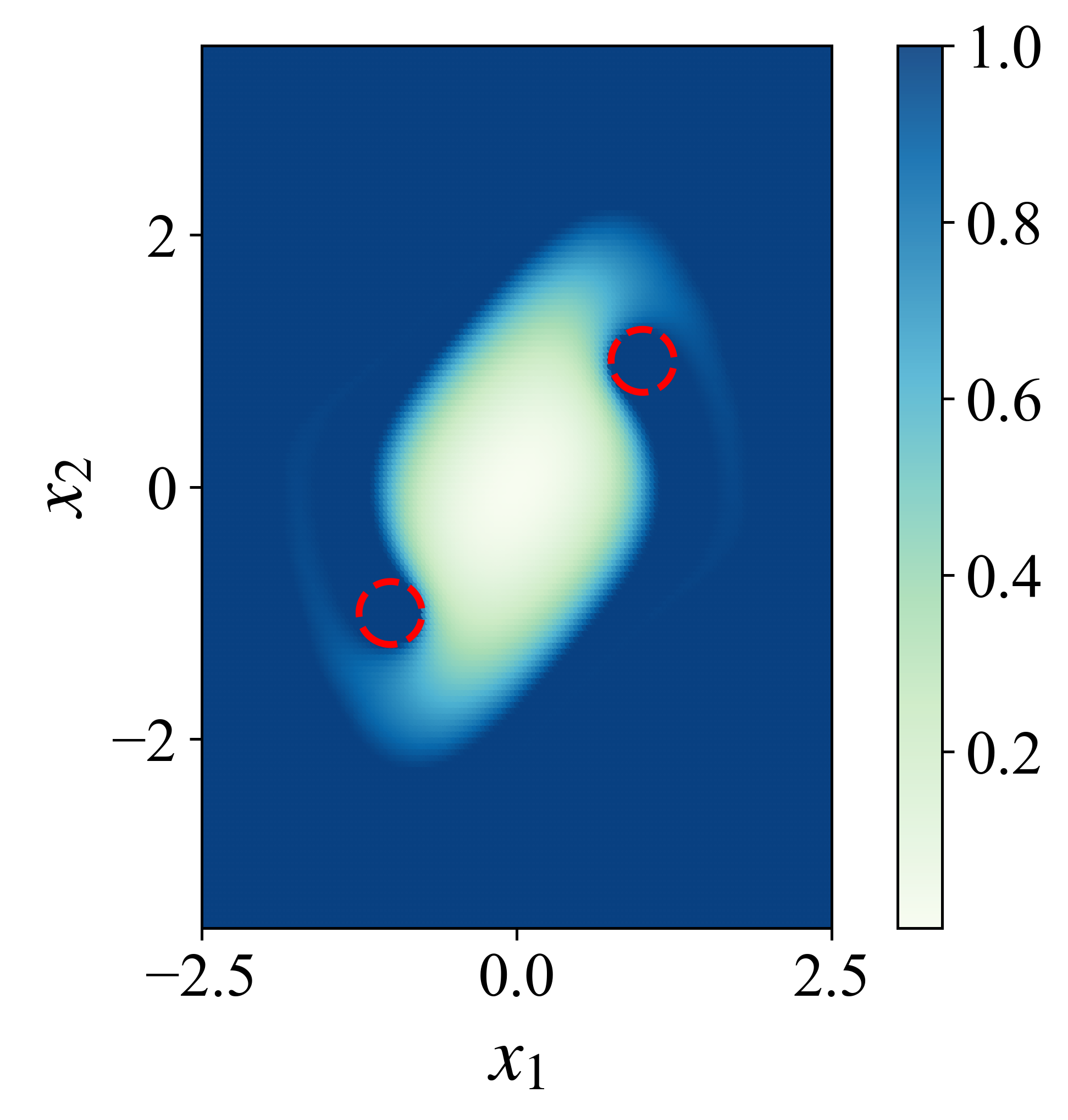}
    \caption{Visualization of the safe domain of attraction for the reversed Van der Pol oscillator with artificial obstacles. The regions enclosed by the  red curves represent the obstacles.}
    \label{fig:compare_qclf_data}
\end{figure}

\begin{figure}[ht]
    \centering
    \includegraphics[scale = 0.3]{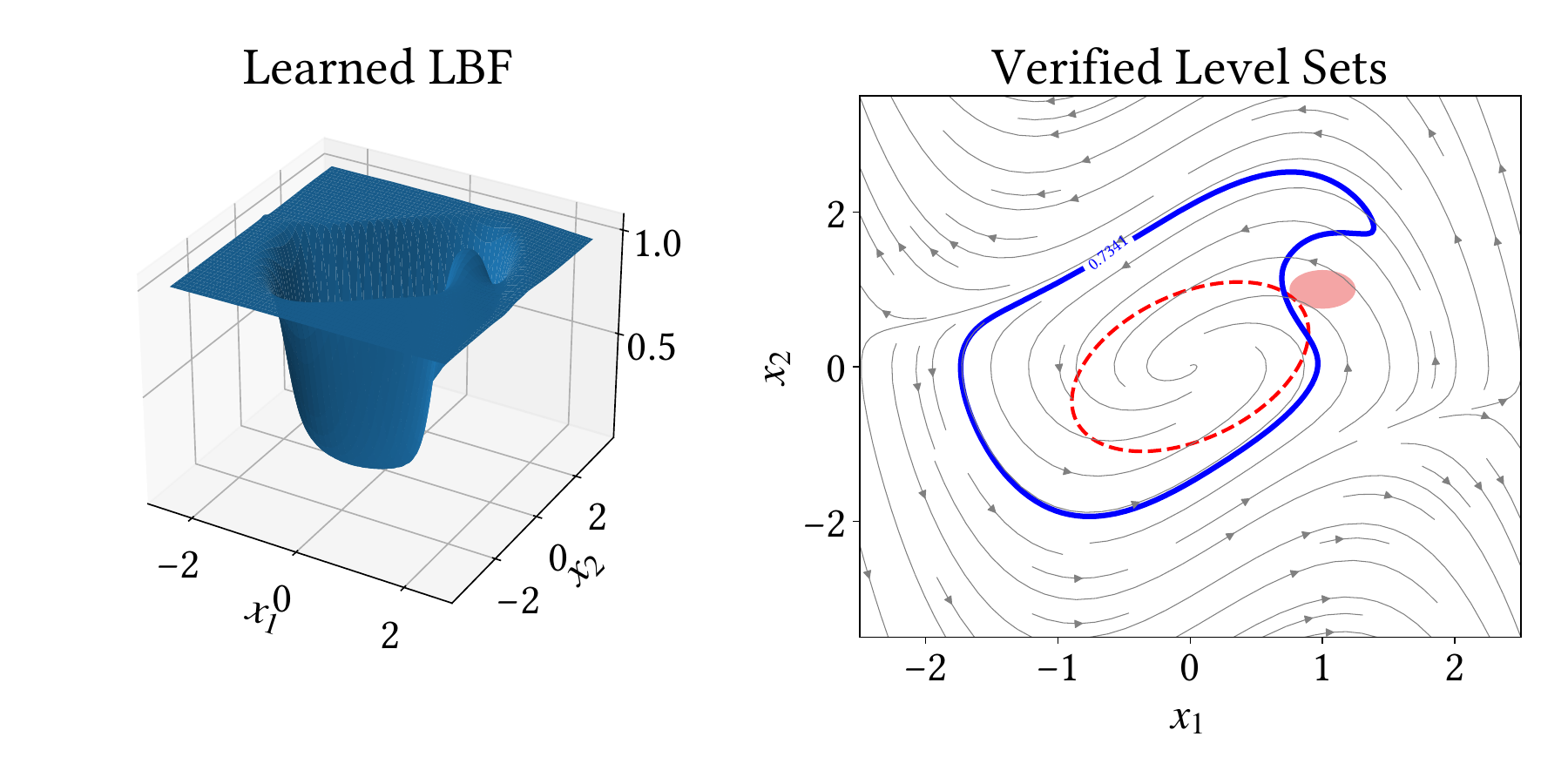}\\
    \includegraphics[scale = 0.3]{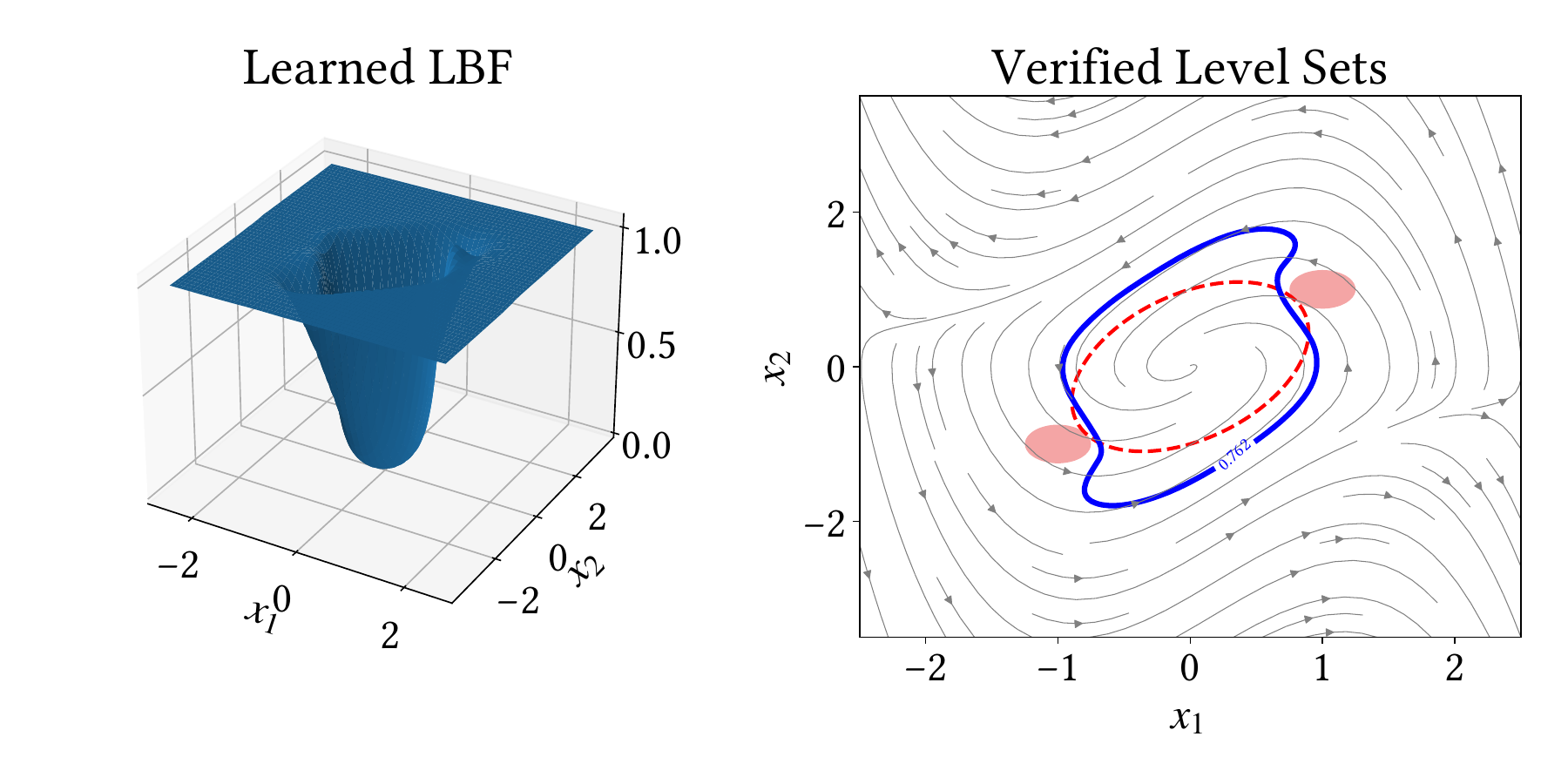}
    \caption{Learned Lyapunov–barrier functions and verified level sets (shown in solid blue) for the reversed Van der Pol oscillator with artificial obstacles. The dashed red curves represent the largest verifiable safe regions of attraction obtained using a quadratic Lyapunov function.}
    \label{fig:verified-lbf-vpd}
\end{figure}

The verified sublevel set, where the Lyapunov–barrier function condition holds, corresponds to a subset of $\D$. The verification using an SMT solver tends to be conservative, as it   returns only the largest level set within which the flow is completely forward invariant. Near the boundary layer of $\D$, the flow evolves slowly, causing the accumulated mass to persist longer and making the corresponding $W$ values less distinguishable from points satisfying $W(x) = 1$. The gradient of $W$ is also highly sensitive to numerical errors in this region, which explains the conservative estimation of the verified set. However, the verified level sets still demonstrate reasonably good learning performance, even in the presence of numerical errors near the boundary layer. 

\begin{figure}[ht]
    \centering
    \includegraphics[scale = 0.307]{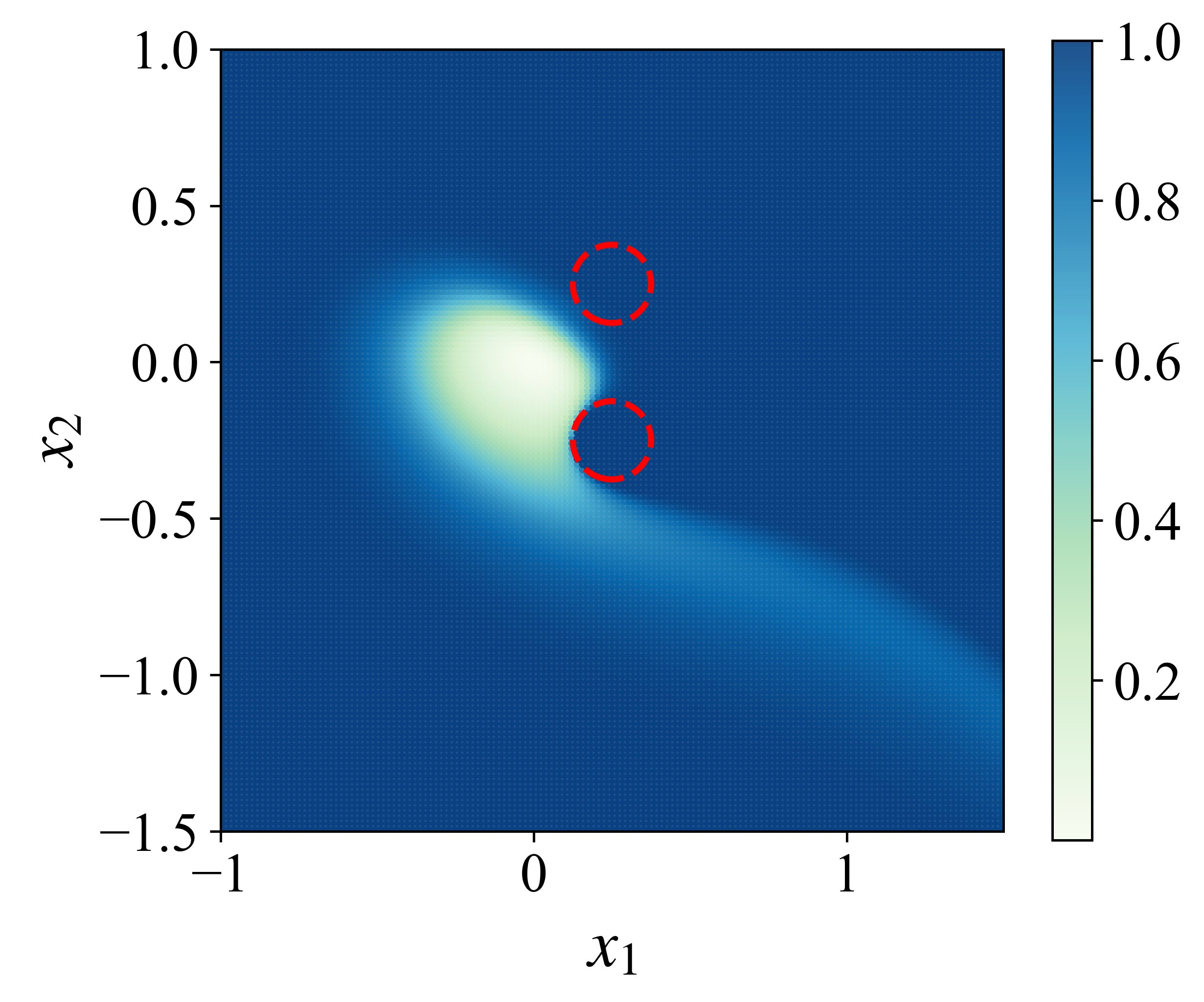}
    \includegraphics[scale = 0.307]{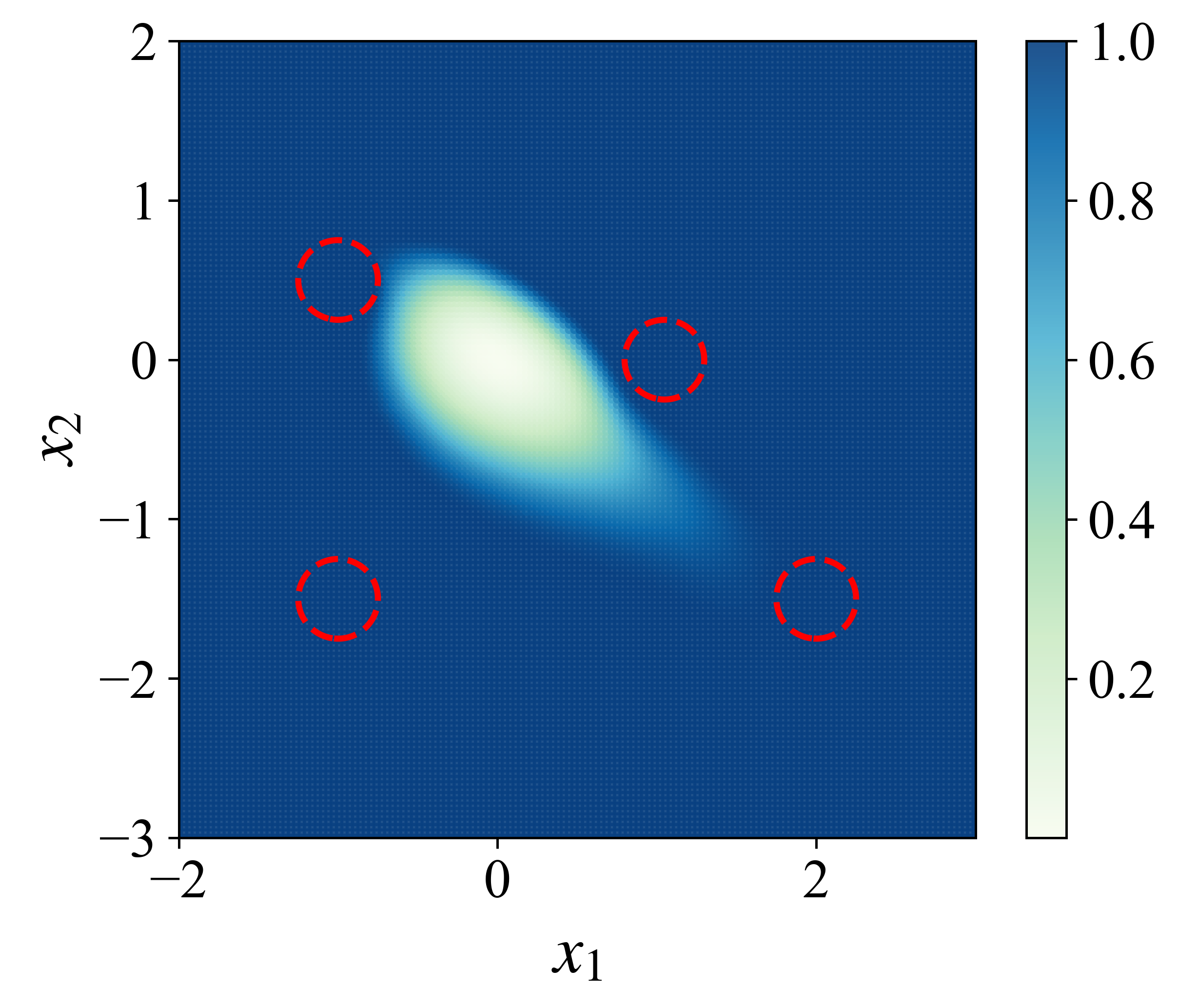}
    \caption{Visualization of the safe domain of attraction for the Two-Machine Power System with artificial obstacles. The regions enclosed by the  red curves represent the obstacles.}
    \label{fig:compare_qclf_data_2}
\end{figure}

\begin{ex}[Two-Machine Power System]
Consider the two-machine power system   modeled by
\begin{equation*}
    \dot{x}_1 = x_2, \quad \dot x_2 = -0.5x_2-(\sin(x_1+\delta)-\sin(\delta)),
\end{equation*}
\end{ex}
where $\delta = \pi/3$.  Note that the system has an unstable equilibrium point at $(\pi/3,0)$. We use this example to demonstrate that the proposed method remains effective when obstacles are placed sufficiently close to the asymptotically stable origin, and when multiple (three or more) obstacles are present. We list the functions that define the obstacles for each case as follows.
\begin{enumerate}
    \item $h_1(x) = 1 + \left(\frac{1}{8}\right)^2 - ((x_1 - 0.25)^2 + (x_2 - 0.25)^2)$;
    $h_2(x) = 1 + \left(\frac{1}{8}\right)^2 - ((x_1 - 0.25)^2 + (x_2 + 0.25)^2) $; $\Omega=[-1.0, 1.5] \times [-1.5, 1.0]$. 
    \item $h_1(x) = 1 + \left(\frac{1}{4}\right)^2 - ((x_1 - 2)^2 + (x_2 +1.5)^2)$;\\
    $h_2(x) = 1 + \left(\frac{1}{4}\right)^2 - ((x_1 +1)^2 + (x_2 -0.5)^2)$;\\
    $h_3(x) = 1 + \left(\frac{1}{4}\right)^2 - ((x_1 +1)^2 + (x_2 +1.5)^2)$;\\
    $h_4(x) = 1 + \left(\frac{1}{4}\right)^2 - ((x_1 -\delta)^2 + x_2^2)$;\\
    $\Omega = [-2.0, 3.0] \times [-3.0, 1.5]$. 
\end{enumerate}
We choose $\beta(s)=\tanh(0.025s)$ for case 1) and $\beta(s) = \tanh(0.01s)$ for case 2) and apply Theorem~\ref{thm: modified} to visualize the safe domain of attraction $\D$ for each case in Figure~\ref{fig:compare_qclf_data_2}. The training results obtained using PINNs and the verification results produced by the SMT solver are shown in Figure~\ref{fig:verified-lbf-vpd2}.

\begin{figure}[ht]
    \centering
    \includegraphics[scale = 0.3]{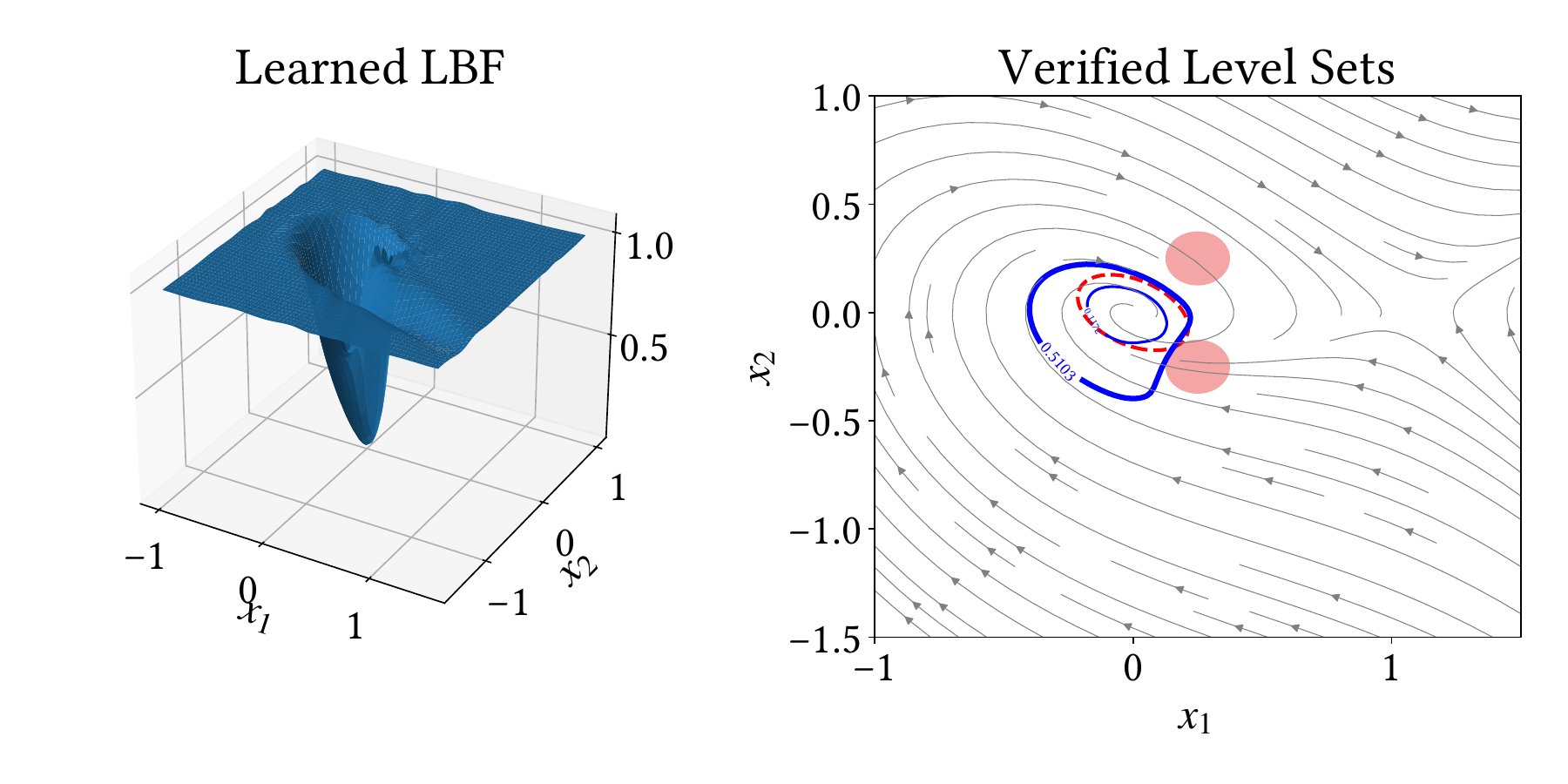}\\
    \includegraphics[scale = 0.3]{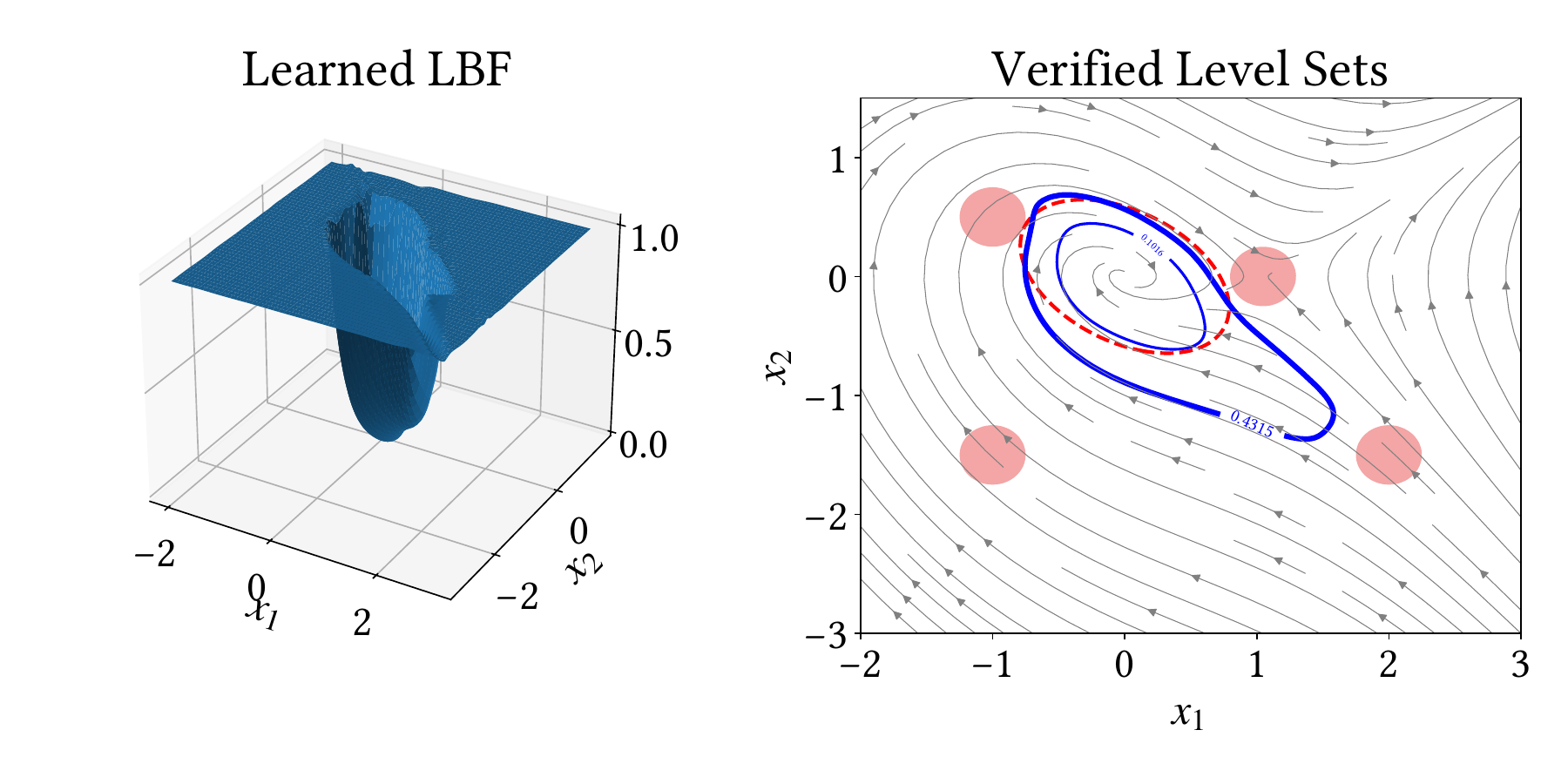}
    \caption{Learned Lyapunov–barrier functions and verified level sets (shown in solid blue) for the two-machine power system with artificial obstacles. The dashed red curves represent the largest verifiable safe regions of attraction obtained using a quadratic Lyapunov function.}
    \label{fig:verified-lbf-vpd2}
\end{figure}

The two examples demonstrate the effectiveness of the proposed method in recovering a reasonably large subset of $\D$, despite the challenge that the flow’s evolution rate varies abruptly near the boundaries of the obstacles. There also exists a very thin layer between the verified level set and the $\partial\Ds$, corresponding to a level set that cannot be verified by the SMT solver.

\section{Conclusion}

In this paper, we proposed a novel method for computing a unified Lyapunov–barrier function by solving a Dirichlet boundary problem of the Zubov equation, defined using a proper indicator function of the obstacles. We also introduced a modified formulation that ensures solvability over an extended region, making it compatible with the existing tool LyZNet. This approach enables the formal verification of the learned near-maximal Lyapunov–barrier function, ensuring that the derivative condition is strictly satisfied. The proposed framework was validated through multiple numerical examples, demonstrating its effectiveness and establishing a foundation for future research on learning CLBFs through PDE formulations. Another future direction is to integrate the current approach with \cite{meng2025learning} to learn a Lyapunov–barrier function for an unknown system.
\bibliographystyle{plain}
\bibliography{ecc26}

@article{liu2025physics,
  title={Physics-informed neural network {L}yapunov functions: {PDE} characterization, learning, and verification},
  author={Liu, Jun and Meng, Yiming and Fitzsimmons, Maxwell and Zhou, Ruikun},
  journal={Automatica},
  volume={175},
  pages={112193},
  year={2025},
  publisher={Elsevier}
}

@inproceedings{liu2024lyznet,
    author={Liu, Jun and Meng, Yiming and Fitzsimmons, Maxwell and Zhou, Ruikun},
    title = {Ly{ZN}et: A Lightweight {P}ython Tool for Learning and Verifying
Neural {L}yapunov Functions and Regions of Attraction},
    booktitle = {Proc. of HSCC},
    year = {2024}
}

@article{zhou2022neural,
  title={Neural {L}yapunov control of unknown nonlinear systems with stability guarantees},
  author={Zhou, Ruikun and Quartz, Thanin and De Sterck, Hans and Liu, Jun},
  journal={Advances in Neural Information Processing Systems},
  volume={35},
  pages={29113--29125},
  year={2022}
}

@article{chang2019neural,
  title={Neural {L}yapunov control},
  author={Chang, Ya-Chien and Roohi, Nima and Gao, Sicun},
  journal={Advances in Neural Information Processing Systems},
  volume={32},
  year={2019}
}

@book{bardi1997optimal,
  title={Optimal Control and Viscosity Solutions of Hamilton-Jacobi-Bellman Equations},
  author={Bardi, Martino and Dolcetta, Italo Capuzzo and others},
  volume={12},
  year={1997},
  publisher={Springer}
}

@article{grune2015zubov,
  title={Zubov’s method for controlled diffusions with state constraints},
  author={Gr{\"u}ne, Lars and Picarelli, Athena},
  journal={Nonlinear Differential Equations and Applications NoDEA},
  volume={22},
  pages={1765--1799},
  year={2015},
  publisher={Springer}
}

@inproceedings{liu2023towards,
  title={Towards learning and verifying maximal neural {L}yapunov functions},
  author={Liu, Jun and Meng, Yiming and Fitzsimmons, Maxwell and Zhou, Ruikun},
  booktitle={Proc. of CDC},
  pages={8012--8019},
  year={2023},
  organization={IEEE}
}

@inproceedings{meng2024physics,
  title={Physics-Informed Neural Network Policy Iteration: Algorithms, Convergence, and Verification},
  author={Meng, Yiming and Zhou, Ruikun and Mukherjee, Amartya and Fitzsimmons, Maxwell and Song, Christopher and Liu, Jun},
  pages={35378-35403},
  year={2024},  
  booktitle={Proc. of Forty-first International Conference on Machine Learning (ICML)}
}

@article{grune2021computing,
  title={Computing {L}yapunov functions using deep neural networks},
  author={Gr{\"u}ne, Lars},
  journal={Journal of Computational Dynamics},
  volume={8},
  number={2},
  pages={131--152},
  year={2021},
  publisher={Journal of Computational Dynamics}
}

@inproceedings{gaby2022lyapunov,
  title={{L}yapunov-Net: A deep neural network architecture for {L}yapunov function approximation},
  author={Gaby, Nathan and Zhang, Fumin and Ye, Xiaojing},
  booktitle={Proc. of CDC},
  pages={2091--2096},
  year={2022},
  organization={IEEE}
}

@article{rego2022learning,
  title={Learning-based robust neuro-control: A method to compute control {L}yapunov functions},
  author={Rego, Rosana CB and de Ara{\'u}jo, F{\'a}bio MU},
  journal={International Journal of Robust and Nonlinear Control},
  volume={32},
  number={5},
  pages={2644--2661},
  year={2022},
  publisher={Wiley Online Library}
}

@article{grune2023examples,
  title={Examples for separable control {L}yapunov functions and their neural network approximation},
  author={Gr{\"u}ne, Lars and Sperl, Mario},
  journal={IFAC-PapersOnLine},
  volume={56},
  number={1},
  pages={19--24},
  year={2023},
  publisher={Elsevier}
}

@article{teel2000smooth,
  title={A smooth {L}yapunov function from a class-$\mathcal{KL}$ estimate involving two positive semidefinite functions},
  author={Teel, Andrew R and Praly, Laurent},
  journal={ESAIM: Control, Optimisation and Calculus of Variations},
  volume={5},
  pages={313--367},
  year={2000},
  publisher={EDP Sciences}
}

@article{meng2022smooth,
  title={Smooth converse Lyapunov-barrier theorems for asymptotic stability with safety constraints and reach-avoid-stay specifications},
  author={Meng, Yiming and Li, Yinan and Fitzsimmons, Maxwell and Liu, Jun},
  journal={Automatica},
  volume={144},
  pages={110478},
  year={2022},
  publisher={Elsevier}
}

@inproceedings{hsu2015control,
  title={Control barrier function based quadratic programs with application to bipedal robotic walking},
  author={Hsu, Shao-Chen and Xu, Xiangru and Ames, Aaron D},
  booktitle={Proc. of ACC},
  pages={4542--4548},
  year={2015},
  organization={IEEE}
}

@incollection{nguyen2020dynamic,
  title={Dynamic walking on stepping stones with gait library and control barrier functions},
  author={Nguyen, Quan and Da, Xingye and Grizzle, JW and Sreenath, Koushil},
  booktitle={Proc. of WAFR},
  pages={384--399},
  year={2020},
  publisher={Springer}
}

@article{ames2016control,
  title={Control barrier function based quadratic programs for safety critical systems},
  author={Ames, Aaron D and Xu, Xiangru and Grizzle, Jessy W and Tabuada, Paulo},
  journal={IEEE Transactions on Automatic Control},
  volume={62},
  number={8},
  pages={3861--3876},
  year={2016},
  publisher={IEEE}
}

@article{romdlony2016stabilization,
  title={Stabilization with guaranteed safety using control Lyapunov--barrier function},
  author={Romdlony, Muhammad Zakiyullah and Jayawardhana, Bayu},
  journal={Automatica},
  volume={66},
  pages={39--47},
  year={2016},
  publisher={Elsevier}
}

@article{dai2022convex,
  title={Convex synthesis and verification of control-Lyapunov and barrier functions with input constraints},
  author={Dai, Hongkai and Permenter, Frank},
  journal={arXiv preprint arXiv:2210.00629},
  year={2022}
}

@article{ahmadi2016some,
  title={Some applications of polynomial optimization in operations research and real-time decision making},
  author={Ahmadi, Amir Ali and Majumdar, Anirudha},
  journal={Optimization Letters},
  volume={10},
  number={4},
  pages={709--729},
  year={2016},
  publisher={Springer}
}

@article{topcu2008local,
  title={Local stability analysis using simulations and sum-of-squares programming},
  author={Topcu, Ufuk and Packard, Andrew and Seiler, Peter},
  journal={Automatica},
  volume={44},
  number={10},
  pages={2669--2675},
  year={2008},
  publisher={Elsevier}
}

@inproceedings{dawson2022safe,
  title={Safe nonlinear control using robust neural lyapunov-barrier functions},
  author={Dawson, Charles and Qin, Zengyi and Gao, Sicun and Fan, Chuchu},
  booktitle={Conference on Robot Learning},
  pages={1724--1735},
  year={2022},
  organization={PMLR}
}

@article{li2025solving,
  title={Solving Reach-and Stabilize-Avoid Problems Using Discounted Reachability},
  author={Li, Boyang and Gong, Zheng and Herbert, Sylvia},
  journal={arXiv preprint arXiv:2505.09067},
  year={2025}
}

@inproceedings{liu2025formally,
  title={Formally verified physics-informed neural control lyapunov functions},
  author={Liu, Jun and Fitzsimmons, Maxwell and Zhou, Ruikun and Meng, Yiming},
  booktitle={2025 American Control Conference (ACC)},
  pages={1347--1354},
  year={2025},
  organization={IEEE}
}

@inproceedings{serry2025safe,
  title={Safe Domains of Attraction for Discrete-Time Nonlinear Systems: Characterization and Verifiable Neural Network Estimation},
  author={Serry, Mohamed and Li, Haoyu and Zhou, Ruikun and Zhang, Huan and Liu, Jun},
  booktitle={Proc. of CDC},
  year={2025}
}

@article{meng2025learning,
  title={Learning regions of attraction in unknown dynamical systems via Zubov-Koopman lifting: Regularities and convergence},
  author={Meng, Yiming and Zhou, Ruikun and Liu, Jun},
  journal={IEEE Transactions on Automatic Control},
volume={70},
  number={10},
pages={6672-6687},
  year={2025},
  publisher={IEEE}
}

\end{document}